\documentclass[article,noinfoline]{imsart}



\usepackage{graphicx,psfrag,epsf}
\usepackage{enumerate}
\usepackage{natbib}
\usepackage{url} 

\usepackage{amssymb,amsmath,amsthm,dsfont,tikz,ifthen,subfigure,booktabs,slashbox
}
\usepackage{titlesec,mathptmx}
\usepackage[small,bf]{caption}
\usepackage[T1]{fontenc}
\usepackage[latin1]{inputenc}
\usepackage[ngerman,english]{babel}
\usepackage[babel,german=quotes]{csquotes}
\usepackage{wasysym}
\usepackage[weather]{ifsym}
\usepackage{multirow}
\usepackage{amsbsy}
\usepackage{multibib}
\usepackage{scalerel}
\usepackage{enumitem}   
\usepackage{listings}
\usepackage{subfigure}
\usetikzlibrary{plotmarks,calc,arrows}
\usepackage{rotating}
\usepackage{units,nicefrac}

\usepackage{amsmath,amsthm}
\usepackage{hyperref}
\hypersetup{
	pdfstartview={FitH},
	colorlinks=true,
	linkcolor=black,
	urlcolor=black,
	citecolor=black,
	bookmarksopen,
	bookmarksopenlevel={1},
	bookmarksnumbered
}


\newtheorem{satz}{Theorem}[section]

\newtheorem{lemma}[satz]{Lemma}

\theoremstyle{definition}
\newtheorem{kor}[satz]{Corollary}

\theoremstyle{remark}

\theoremstyle{plain}

\providecommand{\smalltitle}[1]{
	\subsubsection*{#1}
}
\providecommand{\smalltitle}[1]{
	\subsection*{#1}
}

\numberwithin{equation}{section}

\setlength{\parindent}{0mm}


\usepackage{marginnote}


\newcommand\R{\mathbb R} 
\newcommand\N{\mathbb N} 
\newcommand\C{\mathbb C} 
\newcommand\E{\mathbb E} 
\newcommand{\X}{\mathcal X}

\newcommand\A{\mathcal{A}} 
\newcommand\B{\mathcal{B}} 

 


%



\newcommand\norm[1]{||\,#1\,||_2} 
\newcommand\norminf[1]{||\,#1\,||_\infty} 

\newcommand\inv{^{-1}} 





\renewcommand\d{\mathrm{d}  }
\newcommand\dx{\mathrm{d} x }

\newcommand{\IGNORE}[1]{}

\newcommand\pvec[1]{\mathbf{#1}}

\newcommand{\bbauf}{\big(}
\newcommand{\bbzu}{\big)}

\newcommand{\babs}{\big|}
\newcommand{\Babs}{\Big|}










%





\newcommand{\Pxntheta}{P_{\Xntheta}}
\newcommand{\Pxtheta}{P_{\Xtheta}}
\newcommand{\Pyntheta}{P_{\Yntheta}}

\newcommand\unifweakTheta{\stackrel{w,\Theta}{\Longrightarrow}}
\newcommand\unifdistTheta{\stackrel{D,\Theta}{\Longrightarrow}}
\newcommand{\muntheta}{\mu_{n}^\vartheta}
\newcommand{\mutheta}{\mu^\vartheta}
\newcommand{\dmun}{\mathrm{d}\muntheta}
\newcommand{\dmu}{\mathrm{d}\mutheta}

\newcommand{\Xntheta}{X_{n}^\vartheta}
\newcommand{\Yntheta}{Y_{n}^\vartheta}
\newcommand{\FXntheta}{F_{\Xntheta}}
\newcommand{\FXtheta}{F_{\Xtheta}}
\newcommand{\Xtheta}{X^\vartheta}




\begin{document}

\begin{frontmatter}
	
	\title{Uniform approximation in classical weak convergence theory}
	\runtitle{Uniform weak convergence theory}
	
	\begin{aug}
		\author{\fnms{Viktor}  \snm{Bengs}\thanksref{t1}\ead[label=e1]{bengs@mathematik.uni-marburg.de}}
		\and
		\author{\fnms{Hajo} \snm{Holzmann}\thanksref{t1,t2}\ead[label=e2]{holzmann@mathematik.uni-marburg.de}}

		\address{Department of Mathematics and Computer Science. \\ Philipps-Universit\"at Marburg.	\\Hans-Meerwein-Stra\ss e, 35032 Marburg, Germany   \\
			\printead{e1,e2}}
		
		\thankstext{t1}{Supported by DFG Grant Ho 3260/5-1.}
		\thankstext{t2}{Corresponding author.}
		\runauthor{V. Bengs and H. Holzmann}
		
		\affiliation{ Philipps-Universit\"at Marburg.}
		
	\end{aug}
	
	\begin{abstract} \
		A common statistical task lies in showing asymptotic normality of certain statistics. 
		In many of these situations, classical textbook results on weak convergence theory suffice for the problem at hand.
		However, there are quite some scenarios where stronger results are needed in order to establish an asymptotic normal approximation uniformly over a family of probability measures.
		In this note we collect some results in this direction. We restrict ourselves to weak convergence in $\R^d$ with continuous limit measures. 
	\end{abstract}
	
	\begin{keyword}
		\kwd{Central limit theorem}
		\kwd{Continuous mapping theorem}
		\kwd{Cram\'er-Wold theorem}
		\kwd{L\'evy's continuity theorem}
		\kwd{Lindeberg-Feller theorem}
		\kwd{Portmanteau theorem}
		\kwd{Slutzky's lemma}
	\end{keyword}
	
\end{frontmatter}


In this note we collect some results on asymptotic normal approximation uniformly over a family of probability measures. We restrict ourselves to weak convergence in $\R^d$ with continuous limit measures. 

In this paper we derive a range of uniform versions of  results known in the classical weak convergence theory as for instance in \citet{durrett2010probability, van2000asymptotic, pollard2012convergence}.

Section \ref{sec_definitions} contains the definitions of what we shall mean by uniform weak convergence and uniform convergence in distribution. 
More specifically, we establish in Section \ref{sec_portmanteau} a uniform version of Portmanteau's Theorem for the defined uniform weak convergence and show in Section \ref{sec_weak_on_generator} the sufficiency of deriving weak convergence on generators which are closed under finite intersections.
In Section \ref{sec_relation} the relationship between uniform weak convergence of probability measures of random variables and uniform convergence in distribution of their distribution functions is established.
In what follows, uniform versions of L\'evy's Continuity Theorem and the Cram\'er-Wold Theorem are derived in Section \ref{sec_levy_cramer_wold} and uniform versions of the Continuous Mapping Theorem and Slutzky's Theorem are established in Section \ref{sec_cmt_slutzky}.
Finally, a uniform version of the Lindeberg-Feller Theorem is verified in Section \ref{sec_lindeberg_feller} providing a tool for obtaining uniform central limit theorems.


\section{Preliminaries} \label{sec_definitions}

\subsection{Notation}
We use the following notation in this paper.
By $F_X$ we denote the cumulative distribution function of a random vector $X$ and by $P_X$ its law.
Let $\phi_X$ be the characteristic function of a random vector $X.$

Moreover, we assume that $\Theta$ is some arbitrary set and for any $\vartheta \in \Theta,$ $(\Xntheta)_{n\in\N}$   is a sequence of real-valued random vectors in $\R^d$.
Likewise, for any $\vartheta \in \Theta,$ let $\Xtheta$ be random vectors in $\R^d$ with \emph{continuous} distribution.

\subsection{Uniform convergence in distribution}

We write $ \Xntheta \unifdistTheta \Xtheta  $ if 
$$	 \sup_{\vartheta\in \Theta} |	\FXntheta(\pvec{x}) - \FXtheta(\pvec{x})		|=o(1), \quad \forall \pvec{x} \in \R^d.	$$
In this case we say that $\Xntheta$ \emph{converges uniformly over} $\Theta$ \emph{in distribution to} $\Xtheta.$
%
%
We say that $(\Pxtheta)_{\vartheta \in \Theta}$ is \emph{ uniformly absolutely continuous over $\Theta$ with respect to some continuous probability measure $Q$}, if 
for any $\varepsilon>0$ there exists a $\delta>0$ such that for any measurable $A\subset \R^d$ with $Q(A)<\delta$ one has that $ \sup_{  \vartheta \in \Theta } \Pxtheta(A)<\varepsilon.$ 
Note that by continuous probability measure we mean that the measure of singletons is zero, i.e.\ $Q(\{x\})=0$ for any $x\in \R^d$.

\subsection{Uniform weak convergence}

In the same spirit we define uniform weak convergence for probability measures.
Let $(\X,d)$ be some metric space and let $\A$ be its Borel $\sigma$-algebra.
Let for any $\vartheta \in \Theta,$ $(\muntheta)_{n\in\N}$   be a sequence of probability measures on $(\X,\A).$
Similarly, for any $\vartheta \in \Theta,$ let $\mutheta$ be a probability measure on $(\X,\A).$
In the same manner as for the law of random vectors we define uniform absolute continuity over $\Theta$ for $(\mutheta)_{\vartheta\in\Theta},$ that is $(\mutheta)_{\vartheta\in\Theta}$ is \emph{uniformly absolutely continuous over $\Theta$ with respect to some continuous probability measure $\mu$} if 
for any $\varepsilon>0$ there exists a $\delta>0$ such that for any $A\in  \A$ with $\mu(A)<\delta$ it follows that $ \sup_{  \vartheta \in \Theta } \mutheta(A)<\varepsilon.$
Eventually, we say that $\muntheta$ \emph{converges uniformly weakly over} $\Theta$ to $\mutheta$ and write $ \muntheta \unifweakTheta \mutheta		$ if and only if
$$	\sup_{\vartheta \in \Theta}	\Babs	\int g \, \dmun - \int g \,\dmu		\Babs =o(1)	$$
for any real-valued, bounded and continuous function $g:\X \to \R.$
%


\section{Uniform Portmanteau Theorem} \label{sec_portmanteau}

We start with a uniform version of the Portmanteau theorem.

%
\begin{satz} [Uniform Portmanteau] \label{lemma:uniform_portmanteau}
	Let $(\mutheta)_{\vartheta\in\Theta}$ be  uniformly absolutely continuous over $\Theta$ with respect to some continuous probability measure $\mu.$ 
	The following conditions are equivalent:
	\begin{enumerate}
		\item $ \muntheta \unifweakTheta \mutheta;$
		\item $	\sup_{\vartheta \in \Theta}	|	\int g \, \dmun - \int g \,\dmu		|=o(1)$ 
		for any real-valued, bounded and Lipschitz continuous function $g:\X \to \R;$
		\item $	\limsup_n 	\sup_{\vartheta \in \Theta} (\muntheta(F) - \mutheta(F))	  \leq 0		$ for any closed set $F\subset \X;$
		\item $	\liminf_n 	\inf_{\vartheta \in \Theta} (\muntheta(G) - \mutheta(G))	  \geq 0		$ for any open set $G\subset \X;$
		\item $	\limsup_n 	\sup_{\vartheta \in \Theta} \bbauf g \, \dmun -  \int g \, \dmu)	  \leq 0		$ for any real-valued, bounded and upper-semi-continuous function $g:\X \to \R;$
		\item $	\liminf_n 	\inf_{\vartheta \in \Theta} \bbauf g \, \dmun -  \int g \, \dmu)	  \geq 0		$ for any real-valued, bounded and lower-semi-continuous function $g:\X \to \R;$
		\item $	\sup_{\vartheta \in \Theta}	|	\muntheta(A)- \mutheta(A)	|=o(1)		$ for any set $A\in \A$ such that $\sup_{\vartheta\in \Theta} \mutheta(\partial A)=0.$
	\end{enumerate}
\end{satz}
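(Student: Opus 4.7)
The strategy is to establish the classical Portmanteau chain \textup{(1)}$\Rightarrow$\textup{(2)}$\Rightarrow$\textup{(3)}$\Leftrightarrow$\textup{(4)}$\Rightarrow$\textup{(5)}$\Leftrightarrow$\textup{(6)}$\Rightarrow$\textup{(7)}$\Rightarrow$\textup{(1)}, propagating a supremum over $\vartheta\in\Theta$ through each step. The implication \textup{(1)}$\Rightarrow$\textup{(2)} is immediate, \textup{(3)}$\Leftrightarrow$\textup{(4)} follows from passing to complements, and \textup{(5)}$\Leftrightarrow$\textup{(6)} from $g\mapsto -g$. The uniform absolute continuity hypothesis will be used in two essential places: to handle the $\mu$-null boundaries arising in the approximation of indicators of closed sets, and to select suitable partition points in the final step.

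For \textup{(2)}$\Rightarrow$\textup{(3)}, I would approximate $\ind_F$ from above by the Lipschitz functions $f_k(x)=\bbauf 1-k\cdot d(x,F)\bbzu_+$, which decrease pointwise to $\ind_F$ since $F$ is closed, and split
$$\sup_{\vartheta\in\Theta}\bbauf\muntheta(F)-\mutheta(F)\bbzu \leq \sup_{\vartheta\in\Theta}\Babs\int f_k\,\dmun-\int f_k\,\dmu\Babs + \sup_{\vartheta\in\Theta}\int (f_k-\ind_F)\,\dmu.$$
The first summand tends to zero by \textup{(2)}, while the second is dominated by $\sup_{\vartheta\in\Theta}\mutheta(\{0<d(\cdot,F)\leq 1/k\})$; the $\mu$-measure of this set tends to zero by monotone convergence and continuity of $\mu$, and uniform absolute continuity transfers this to the $\mutheta$ uniformly. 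For \textup{(3)}$\Rightarrow$\textup{(5)} I would normalize an upper semi-continuous bounded $g$ so that $0\leq g\leq M$, invoke the layer cake identity $\int g\,\d\nu=\int_0^M\nu(\{g\geq t\})\,\d t$ with closed super-level sets, pull $\sup_\vartheta$ inside the outer integral and apply reverse Fatou together with \textup{(3)}. The step \textup{(5)}$+$\textup{(6)}$\Rightarrow$\textup{(7)} reduces to the sandwich $\muntheta(A^\circ)\leq\muntheta(A)\leq\muntheta(\bar A)$ combined with $\sup_{\vartheta\in\Theta}(\mutheta(\bar A)-\mutheta(A^\circ))\leq \sup_{\vartheta\in\Theta}\mutheta(\partial A)=0$.

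The main obstacle is \textup{(7)}$\Rightarrow$\textup{(1)}. For continuous bounded $g$ with $a\leq g\leq b$, I want to partition $[a,b]$ at cut points $t_0=a<t_1<\cdots<t_N=b$ with $\max_i(t_i-t_{i-1})<\varepsilon$ so that each $A_i=\{t_{i-1}<g\leq t_i\}$ satisfies the uniform boundary condition in \textup{(7)}. By continuity of $g$, $\partial A_i\subset\{g=t_{i-1}\}\cup\{g=t_i\}$, so it suffices to choose the $t_i$ in $N=\{t : \sup_{\vartheta\in\Theta}\mutheta(\{g=t\})=0\}$. The pivotal observation is that uniform absolute continuity with respect to a \emph{continuous} probability measure $\mu$ forces $\mu(A)=0\Rightarrow\sup_{\vartheta\in\Theta}\mutheta(A)=0$ (letting $\varepsilon\downarrow 0$ in the definition, using that $\mu(A)=0<\delta(\varepsilon)$). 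Because the disjoint family $\{g=t\}$, $t\in[a,b]$, can carry positive $\mu$-mass for at most countably many $t$, the complement of $N$ in $[a,b]$ is countable, so admissible partitions are plentiful. Approximating $g$ uniformly to within $\varepsilon$ by the simple function $h=\sum_i t_{i-1}\ind_{A_i}$ and applying \textup{(7)} to each $A_i$ then yields $\limsup_n\sup_{\vartheta\in\Theta}|\int g\,\dmun-\int g\,\dmu|\leq 2\varepsilon$, and sending $\varepsilon\to 0$ closes the loop.
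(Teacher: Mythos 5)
Your proof is correct and, for almost every link in the cycle, coincides with the paper's: the same Lipschitz approximation $\max\{1-k\,d(\cdot,F),0\}$ of $1_F$ for (2)$\Rightarrow$(3) with the same three-term split and the same use of uniform absolute continuity on the collar $\{0<d(\cdot,F)\le 1/k\}$, the same layer-cake-plus-reverse-Fatou argument for (3)$\Rightarrow$(5), and the same interior/closure sandwich to obtain (7). The one place you genuinely diverge is how the loop is closed starting from (7). The paper proves (7)$\Rightarrow$(3): for closed $F$ it considers the distance balls $B_F(r)=\{x\,:\,d(x,F)\le r\}$, observes that the disjoint level sets $\{d(\cdot,F)=r\}$ can carry positive $\mu$-mass for at most countably many $r$, selects $r_k\searrow 0$ with $\sup_{\vartheta}\mutheta(\partial B_F(r_k))=0$, applies (7) to $B_F(r_k)$ and lets $k\to\infty$; it then gets back to (1) by re-entering the already established chain (3)$\Rightarrow$(5), (5)$+$(6)$\Rightarrow$(1), the last step using that a continuous function is both upper and lower semi-continuous. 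You instead prove (7)$\Rightarrow$(1) directly, slicing the range of a bounded continuous $g$ at levels $t$ with $\sup_{\vartheta}\mutheta(\{g=t\})=0$ and approximating $g$ uniformly by a simple function over the resulting continuity sets. Both closing arguments rest on the identical pair of observations --- uniform absolute continuity upgrades ``$\mu$-null'' to ``uniformly $\mutheta$-null'', and an uncountable disjoint family admits at most countably many $\mu$-atoms --- applied to different one-parameter families of sets; yours is marginally more self-contained since it does not need to route back through (3) and (5), while the paper's confines the atom-avoidance argument to closed sets. One cosmetic repair to yours: with $t_0=a$ the cells $A_i=\{t_{i-1}<g\le t_i\}$ miss $\{g=a\}$, so take $t_0<\inf g$ so that the cells genuinely partition $\X$.
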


\allowdisplaybreaks
%
%
\begin{proof} [\sl Proof of Theorem \ref{lemma:uniform_portmanteau}]
	It is clear, that 1.\ implies 2.
	Now, we show that 2.\ implies 3.
	Let $F \subset \X$ be a closed set and for $k\in \N$ let
	$$	F_k	 = \{	x \in \X \ | \ d(x,F)\leq 1/k	\},	$$
	where $d(x,F)=\inf\{d(x,y) \ | \ y\in F \}.$
	Additionally, for any $x\in \X$ let $g_{k,F}(x)=\max\{1- k d(x,F),0	\}.$
	Then $g_{k,F}$ is Lipschitz continuous with $1_F \leq g_{k,F} \leq 1_{F_k}.$
	Thus,
	\begin{align*}
	\limsup_n 	\sup_{\vartheta \in \Theta} (\muntheta(F) - \mutheta(F))
	&= \limsup_n 	\sup_{\vartheta \in \Theta} \bbauf \int 1_F \dmun - \int 1_F \dmu\bbzu \\
	&\leq \limsup_n 	\sup_{\vartheta \in \Theta} \bbauf \int g_{k,F} \dmun - \int 1_F \dmu\bbzu \\
	&\leq \limsup_n 	\sup_{\vartheta \in \Theta} \bbauf \int g_{k,F} \dmun - \int  g_{k,F}  \dmu\bbzu \\
	&\quad + \limsup_n 	\sup_{\vartheta \in \Theta} \bbauf \int g_{k,F} \dmu - \int 1_{F_k}  \dmu\bbzu \\
	&\quad  + 	\sup_{\vartheta \in \Theta} \bbauf  \int 1_{F_k} \dmu - \int 1_{F}  \dmu\bbzu.
	\end{align*}
	Now, the first term on the right-hand side of the latter inequality is zero by assumption 2.
	The second is smaller than zero by construction of $g_{k,F}.$
	Concerning the third term, note that by continuity of measure $\mu(F_k\backslash F)\to 0$ for $k\to \infty.$
	Consequently, considering the limit $k \to \infty$ it follows by the  uniform absolute continuity of $(\mutheta)_{\vartheta\in\Theta}$ with respect to $\mu$ and reasons of monotonicity that the third term tends to zero.
	
	3.\ is equivalent to 4.\ by taking complements.
	Similarly, 5.\ and 6.\ are apparently equivalent.
	Moreover, 5.\ and 6.\ together imply 1.
	We prove that 3.\ implies 5. 
	Let $g:\X \to \R$ be a bounded, upper-semi-continuous function with $a<g<b$ for some $a,b\in \R.$
	%
	%
	Then, 
	\begin{align*}
	\sup_{\vartheta \in \Theta} \bbauf \int g \, \dmun -  \int g \, \dmu\bbzu	
	&= \sup_{\vartheta \in \Theta} \int_a^b  \bbauf\muntheta(g\geq x)  -\mutheta(g\geq x) \bbzu \, \dx   	\\
	&\leq  \int_a^b \sup_{\vartheta \in \Theta} \bbauf\muntheta(g\geq x)  -\mutheta(g\geq x) \bbzu \, \dx   
	\end{align*}
	Thus, by Fatou's lemma and by 3.
	\begin{align*}
	\limsup_n 	\sup_{\vartheta \in \Theta} \bbauf \int g \, \dmun -  \int g \, \dmu\bbzu   	
	&\leq \int_a^b \limsup_n \sup_{\vartheta \in \Theta} \bbauf\muntheta(g\geq x)  -\mutheta(g\geq x) \bbzu \, \dx \leq 0,
	\end{align*}
	which implies 5. It remains to incorporate 7.\ in the implication flow.
	Next, we show that 3.\ and 4.\ imply 7.\ and vice versa.
	Starting with 3.\ and 4.\ implies 7.\,, we consider some $A\in \A$ such that $\sup_{\vartheta \in \Theta} \mutheta (\partial A)=0.$
	Let $A^\circ$ denote its interior and $\bar A$  its closure.
	Clearly, $A^\circ \subset A \subset \bar A$ and $A^\circ$ is open and $\bar A$ is closed.
	Furthermore, $ \mutheta (A^\circ)= \mutheta (\bar A ) = \mutheta (A ) $ for any $\vartheta \in \Theta.$
	Thus, by 3.\ 
	\begin{align*}
	\limsup_n 	\sup_{\vartheta \in \Theta} (\muntheta(A) - \mutheta(A)) 
	\leq \limsup_n 	\sup_{\vartheta \in \Theta} (\muntheta(\bar A) - \mutheta(\bar A)) \leq 0
	\end{align*}
	and by 4.\
	\begin{align*}
	\liminf_n 	\inf_{\vartheta \in \Theta} (\muntheta(A) - \mutheta(A))
	\geq \liminf_n 	\inf_{\vartheta \in \Theta} (\muntheta(A^\circ) - \mutheta(A^\circ)) \geq 0.
	\end{align*}
	Both latter inequalities together imply 7.
	Now suppose 7.\ holds.
	Let $x\in \X$ and $F \subset \X$ be closed.
	Define for $r\geq 0$
	$$	B_F(r)= \{	x\in \X \ | \ d(x,F) \leq r 		\}, \quad \mbox{and} \quad C_F(r)=\{  x\in \X \ | \ d(x,F)=r	\}.  $$
	Then, $(C_F(r))_{r\geq 0}$ is a partition of $\X.$ 
	Note that there exists a countable set $R \subset [0,\infty)$ such that $\mu(C_F(r))=0$ for $r\in [0,\infty)\backslash R,$ otherwise we could contradict the measure properties of $\mu.$
	It holds that $\partial B_F(r) \subset C_F(r)$ and thus $\mu(\partial B_F(r) )=0$ for $r\in [0,\infty)\backslash R.$
	Due to uniform absolute continuity of $(\mutheta)_{\vartheta\in\Theta}$  with respect to $\mu$ one has that $\sup_{\vartheta \in \Theta} \mutheta(\partial B_F(r))=0$ for $r\in [0,\infty)\backslash R.$ 
	Hence, there exists a sequence $r_k\searrow 0$ such that 
	$$\sup_{\vartheta \in \Theta} \mutheta(\partial B_F(r_k))=0, \quad \forall k \in \N.$$
	Since $F \subset B_F(r_k)$ and by 7. we have for any $k\in \N$
	\begin{align*}
	\limsup_n 	\sup_{\vartheta \in \Theta} (\muntheta(F) - \mutheta(B_F(r_k)))
	\leq	\limsup_n 	\sup_{\vartheta \in \Theta} (\muntheta(B_F(r_k) - \mutheta(B_F(r_k)))=0.
	\end{align*}
	Considering the limit $k \to \infty$ and noticing that by reasons of monotonicity the left-hand side of the latter display tends to $\limsup_n 	\sup_{\vartheta \in \Theta} (\muntheta(F) - \mutheta(F) )  $ concludes the lemma.
\end{proof}



\section{Uniform weak convergence on generators} \label{sec_weak_on_generator}

The following lemma shows that it suffices to show uniform weak convergence on some generator which is closed under finite intersections.

\begin{lemma} \label{lemma_generator_equivalence}
	Let $(\mutheta)_{\vartheta\in\Theta}$ be  uniformly absolutely continuous over $\Theta$ with respect to some continuous probability measure $\mu$  and let $\tilde \B$ be a collection of open subsets which is closed under finite intersections. 
	Furthermore, each open set of $\X$ can be represented as a countable union of elements in $\tilde \B.$
	Then, 
	\begin{align} \label{defi_unif_conv_on_generator}
	\sup_{\vartheta \in \Theta}	|	\muntheta(A)  -\mutheta(A)		| =o(1)
	\end{align}
	for all $A\in \tilde \B$ implies that	$ \muntheta \unifweakTheta \mutheta.$ 
\end{lemma}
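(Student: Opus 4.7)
The plan is to reduce the statement to condition 4 of the Uniform Portmanteau Theorem (Theorem \ref{lemma:uniform_portmanteau}), namely that $\liminf_n \inf_{\vartheta \in \Theta} (\muntheta(G) - \mutheta(G)) \geq 0$ for every open $G \subset \X$. Fix such an open $G$ and write $G = \bigcup_{k=1}^\infty B_k$ with $B_k \in \tilde \B$, which is possible by hypothesis. Set $G_N := \bigcup_{k=1}^N B_k$, so that $G_N \uparrow G$.

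First I would handle the tail $G \setminus G_N$ by combining continuity of $\mu$ with uniform absolute continuity of $(\mutheta)_{\vartheta \in \Theta}$. Since $\mu(G \setminus G_N) \searrow 0$, for any $\varepsilon > 0$ one can pick $N$ so large that $\mu(G \setminus G_N) < \delta$, where $\delta$ is the constant furnished by uniform absolute continuity; this yields $\sup_{\vartheta \in \Theta} (\mutheta(G) - \mutheta(G_N)) < \varepsilon$.

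Next, the key step is to leverage closure under finite intersections to replace the finite union $G_N$ by a signed combination of sets that actually lie in $\tilde \B$. By the inclusion-exclusion formula,
\begin{align*}
\muntheta(G_N) - \mutheta(G_N) = \sum_{\emptyset \neq S \subseteq \{1,\ldots,N\}} (-1)^{|S|+1} \bbauf \muntheta\bbauf \bigcap_{i \in S} B_i \bbzu - \mutheta\bbauf \bigcap_{i \in S} B_i \bbzu \bbzu,
\end{align*}
and every intersection $\bigcap_{i \in S} B_i$ belongs to $\tilde \B$ by the closure assumption. Taking absolute values, the supremum over $\vartheta$ is bounded by a finite sum of terms of the form \eqref{defi_unif_conv_on_generator}, so $\sup_{\vartheta \in \Theta} |\muntheta(G_N) - \mutheta(G_N)| = o(1)$ for this fixed $N$.

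Combining both pieces via the chain
\[
\muntheta(G) - \mutheta(G) \geq \bbauf \muntheta(G_N) - \mutheta(G_N) \bbzu + \bbauf \mutheta(G_N) - \mutheta(G) \bbzu
\]
and taking $\inf_\vartheta$ followed by $\liminf_n$ gives $\liminf_n \inf_{\vartheta \in \Theta} (\muntheta(G) - \mutheta(G)) \geq -\varepsilon$. Letting $\varepsilon \to 0$ establishes condition 4 of Theorem \ref{lemma:uniform_portmanteau}, and hence $\muntheta \unifweakTheta \mutheta$. The main obstacle here is recognizing that, although finite unions of elements of $\tilde \B$ need not themselves lie in $\tilde \B$, closure under finite intersections together with inclusion-exclusion still allows one to transfer the uniform control from generators to arbitrary finite subunions, and then uniform absolute continuity of the limit family supplies the tail estimate needed to pass from $G_N$ back to $G$.
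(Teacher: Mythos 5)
Your proof is correct and follows essentially the same route as the paper's: both extend the uniform convergence from $\tilde \B$ to finite unions via inclusion--exclusion and closure under intersections (the paper does the two-set case and argues by a ``without loss of generality'' induction, you write the full formula), both control the tail $G \setminus G_N$ by combining continuity of $\mu$ with uniform absolute continuity, and both conclude via the open/closed-set condition of the uniform Portmanteau theorem. No substantive difference.
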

\begin{proof}[\sl Proof of Lemma \ref{lemma_generator_equivalence}]
	Let $G_1,G_2 \in \tilde \B.$ 
	By (\ref{defi_unif_conv_on_generator}) and since $G_1\cap G_2 \in \tilde \B,$ we have
	\small
	\begin{align*}
	\sup_{\vartheta \in \Theta}	&|	\muntheta(I_1 \cup I_2)  -\mutheta(I_1 \cup I_2)		| \\
	&\leq  	\sup_{\vartheta \in \Theta} \Big[	\babs	\muntheta(I_1 )  -\mutheta(I_1)		\babs + 	\babs	\muntheta(I_2 )  -\mutheta(I_2)		\babs  + 	\babs	\muntheta(I_1 \cap  I_2)  -\mutheta(I_1 \cap I_2)	\babs \Big] =o(1).
	\end{align*}
	\normalsize
	Therefore, we can assume without loss of generality that $\tilde \B$ is closed under finite unions.
	Let $G \subset \X$ be open and let $A_i \in \tilde \B$ be such that $G= \cup_{i=1}^\infty A_i.$
	Due to continuity of measure, for each $\delta>0$ there exists an integer $\tilde N$ such that $\mu(G)\leq \mu(\cup_{i=1}^{\tilde N} A_i) +\delta. $ 
	By the uniform absolute continuity of  $(\mutheta)_{\vartheta\in\Theta}$ over $\Theta $ with respect to $\mu$ there exists  an integer $N$ such that for any $\varepsilon>0$ 
	$$		\mutheta(G) \leq \mutheta(\cup_{i=1}^N A_i) +\varepsilon ,\quad \forall \vartheta \in \Theta.	$$
	Therefore,
	\begin{align*}
	\liminf_n \inf_{\vartheta \in \Theta} (	\muntheta(G) - \mutheta(G) + \varepsilon		)
	&\geq \liminf_n \inf_{\vartheta \in \Theta} \bbauf	\muntheta(G) - \mutheta(\cup_{i=1}^N A_i)		\bbzu \\
	&\geq \liminf_n \inf_{\vartheta \in \Theta} \bbauf	\muntheta(\cup_{i=1}^N A_i) - \mutheta(\cup_{i=1}^N A_i)		\bbzu=0,
	\end{align*}
	where the last equation is due to (\ref{defi_unif_conv_on_generator}), since $\cup_{i=1}^N A_i \in \tilde \B.$
	Letting $\varepsilon \to 0$ completes the proof by means of 3.\ of Theorem \ref{lemma:uniform_portmanteau}.
\end{proof}


\section{Relation between uniform weak convergence and uniform convergence in distribution} \label{sec_relation}

The following theorem relates the uniform weak convergence with the uniform convergence in distribution.

\begin{satz} \label{theorem_equivalence_weak_distrib_conv}
	Let $(\mutheta)_{\vartheta\in\Theta}$ be  uniformly absolutely continuous over $\Theta$ with respect to some continuous probability measure $\mu.$ 
	Let $\X=\R^d$ and thus $\A$ be the Borel-$\sigma$-algebra on $\R^d.$
	Setting 
	$$	F_{\vartheta,n}(\pvec{x}) = \muntheta\bbauf(-\infty,\pvec{x}]\bbzu, \quad \mbox{and} \quad F_{\vartheta}(\pvec{x}) = \mutheta\bbauf(-\infty,\pvec{x}]\bbzu, \quad \pvec{x}\in \R^d.	$$
	The following two statements are equivalent:
	\begin{enumerate}
		\item $ \muntheta \unifweakTheta \mutheta		$;
		\item $  \sup_{\vartheta \in \Theta} |F_{\vartheta,n}(\pvec{x}) - F_{\vartheta}(\pvec{x})| =o(1),  \quad \forall\pvec{x} \in \R^d.		$
	\end{enumerate}
\end{satz}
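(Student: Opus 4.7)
The strategy is to reduce both implications to machinery already in the paper: for $(1)\Rightarrow(2)$ the uniform Portmanteau Theorem~\ref{lemma:uniform_portmanteau}, and for $(2)\Rightarrow(1)$ the generator-reduction Lemma~\ref{lemma_generator_equivalence}. Throughout, the continuity of $\mu$ is used to ensure that axis-parallel hyperplanes are $\mu$-null, and uniform absolute continuity then transfers this to $\sup_{\vartheta\in\Theta}\mutheta$.

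For $(1)\Rightarrow(2)$, fix $\pvec{x}\in\R^d$ and set $A=(-\infty,\pvec{x}]$. The boundary $\partial A$ is contained in $\bigcup_{i=1}^d\{\pvec{y}\in\R^d:y_i=x_i\}$, hence $\mu(\partial A)=0$ by continuity of $\mu$, which upgrades via uniform absolute continuity to $\sup_{\vartheta\in\Theta}\mutheta(\partial A)=0$. Part~(7) of Theorem~\ref{lemma:uniform_portmanteau} then yields
\[
\sup_{\vartheta\in\Theta}\left|F_{\vartheta,n}(\pvec{x})-F_\vartheta(\pvec{x})\right|=\sup_{\vartheta\in\Theta}\left|\muntheta(A)-\mutheta(A)\right|=o(1).
\]

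For $(2)\Rightarrow(1)$, I take $\tilde\B$ to be the family of open rectangles $R=(a_1,b_1)\times\cdots\times(a_d,b_d)$. This family is closed under finite intersections, and every open subset of $\R^d$ is a countable union of such rectangles (with rational corners, say), so Lemma~\ref{lemma_generator_equivalence} reduces the task to verifying $\sup_{\vartheta\in\Theta}|\muntheta(R)-\mutheta(R)|=o(1)$ for each $R\in\tilde\B$. Since $\mu$ (and therefore uniformly the $\mutheta$) assigns no mass to the coordinate hyperplanes bounding $R$, one may pass to the closure of $R$ without changing any of the measures involved, and the standard inclusion-exclusion identity represents both $\muntheta(R)$ and $\mutheta(R)$ as alternating sums $\sum_{I\subset\{1,\ldots,d\}}(-1)^{|I|}F_{\cdot}(\pvec{c}_I)$ over the $2^d$ corners $\pvec{c}_I$ of $R$ (where $\pvec{c}_I$ has $i$-th coordinate $a_i$ for $i\in I$ and $b_i$ otherwise). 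Hypothesis~(2) gives uniform convergence at each corner, and the triangle inequality on a fixed finite sum finishes the argument.

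The principal subtlety I anticipate concerns the interplay between the continuity of $\mu$ and the higher-dimensional structure of rectangles: in dimension $d\geq 2$ the literal reading of the paper's continuity assumption (singletons are $\mu$-null) does not by itself preclude $\mu$ from charging a coordinate hyperplane, whereas the argument needs the stronger statement that every axis-parallel hyperplane is $\mu$-null. I would either adopt this stronger reading as the intended one, or, if only the pointwise version is available, perturb $\pvec{x}$ to nearby points $\pvec{x}_k^\pm\to\pvec{x}$ that avoid the countably many coordinate values where $\mu$ concentrates hyperplane mass, apply Portmanteau~(7) at each $\pvec{x}_k^\pm$, and pass to the limit in $k$ by a sandwich using uniform absolute continuity --- but this sandwich closes only when the upper faces of $(-\infty,\pvec{x}]$ are themselves ultimately $\mu$-null.
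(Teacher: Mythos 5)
Your overall architecture coincides with the paper's: the direction $(1)\Rightarrow(2)$ via part~7 of the uniform Portmanteau theorem applied to $A=(-\infty,\pvec{x}]$, and $(2)\Rightarrow(1)$ via Lemma~\ref{lemma_generator_equivalence} with $\tilde\B$ the open rectangles. The subtlety you flag in the first direction --- that ``no atoms'' does not imply that axis-parallel hyperplanes are $\mu$-null when $d\geq 2$ --- is a fair observation, but it afflicts the paper's own proof in exactly the same way (the paper also just invokes ``continuity'' to get $\sup_{\vartheta}\mu^\vartheta(\partial A_{\pvec{x}})=0$); under the evidently intended reading that the distribution function of $\mu$ is continuous, both arguments are fine, and for $d=1$ there is no issue at all.

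There is, however, a genuine gap in your $(2)\Rightarrow(1)$ step. You assert that, because $\mu$ and hence uniformly the limits $\mu^\vartheta$ put no mass on the hyperplanes bounding $R=(\pvec{a},\pvec{b})$, one may ``pass to the closure of $R$ without changing any of the measures involved.'' This is false for the \emph{approximating} measures $\mu_n^\vartheta$: they are not assumed absolutely continuous with respect to $\mu$ (only the limit family is), and they may well charge $\partial R$. Consequently the inclusion--exclusion sum over the corners, $\sum_{I}(-1)^{|I|}F_{\vartheta,n}(\pvec{c}_I)$, computes $\mu_n^\vartheta\bbauf(\pvec{a},\pvec{b}]\bbzu$ and not $\mu_n^\vartheta(R)$; the discrepancy is (up to further faces) $\mu_n^\vartheta(\partial R)$, which hypothesis~(2) does not directly control. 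What you get for free is only the one-sided bound $\limsup_n\sup_\vartheta\bbauf\mu_n^\vartheta(R)-\mu^\vartheta(R)\bbzu\leq 0$, whereas Lemma~\ref{lemma_generator_equivalence} requires two-sided uniform convergence on $\tilde\B$. To close the gap you need a sandwich: bound $\mu_n^\vartheta(R)$ from below by $\mu_n^\vartheta\bbauf(\pvec{a}',\pvec{b}']\bbzu$ for a slightly shrunken half-open rectangle, apply hypothesis~(2) at its corners, and use the uniform absolute continuity of $(\mu^\vartheta)_{\vartheta\in\Theta}$ with respect to $\mu$ together with continuity of measure to make $\sup_\vartheta\bbauf\mu^\vartheta(R)-\mu^\vartheta((\pvec{a}',\pvec{b}'])\bbzu$ small. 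This is precisely the role of the left-limit $F_{\vartheta,n}(\pvec{b}-)$ and the $\delta$-shift $\pvec{x}-\delta(1,\ldots,1)^T$ in the paper's proof; the fallback perturbation you sketch in your final paragraph is aimed at the hyperplane issue in direction $(1)\Rightarrow(2)$ and does not address this point.
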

The latter theorem implies immediately the following corollary.

\begin{kor}\label{corollary_equivalence_weak_distrib_conv}
	If $(\Pxtheta)_{\vartheta \in \Theta}$ is uniformly absolutely continuous over $\Theta$ with respect to some continuous probability measure $Q,$ then	the following two statements are equivalent:
	\begin{enumerate}
		\item $  \Pxntheta \unifweakTheta \Pxtheta		$;
		\item $  \Xntheta \unifdistTheta \Xtheta. $
		
	\end{enumerate}
\end{kor}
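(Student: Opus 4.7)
The plan is to observe that Corollary \ref{corollary_equivalence_weak_distrib_conv} is nothing more than a translation of Theorem \ref{theorem_equivalence_weak_distrib_conv} from the language of probability measures on $(\R^d,\A)$ to the language of random vectors via the standard correspondence between a random vector and its law. Hence the corollary should follow with essentially no additional work, by identifying the objects on both sides.

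First, I would set $\mutheta := \Pxtheta$ and $\muntheta := \Pxntheta$, and keep $\mu := Q$ as the dominating continuous probability measure. Under this identification, the hypothesis of the corollary (uniform absolute continuity of $(\Pxtheta)_{\vartheta\in\Theta}$ over $\Theta$ with respect to $Q$) is literally the hypothesis of Theorem \ref{theorem_equivalence_weak_distrib_conv}. Moreover, statement 1 of the corollary, namely $\Pxntheta \unifweakTheta \Pxtheta$, coincides by definition with statement 1 of the theorem, since uniform weak convergence $\unifweakTheta$ depends only on the measures and not on the random variables realising them.

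Second, I would check that statement 2 of the corollary corresponds to statement 2 of the theorem. By the very definition of the cumulative distribution function of a random vector in $\R^d$, one has
\[
  F_{\Xtheta}(\pvec{x}) \;=\; \Prob(\Xtheta \leq \pvec{x}) \;=\; \Pxtheta\bbauf(-\infty,\pvec{x}]\bbzu \;=\; F_\vartheta(\pvec{x}),
\]
and analogously $F_{\Xntheta}(\pvec{x}) = F_{\vartheta,n}(\pvec{x})$ for every $\pvec{x}\in\R^d$. Therefore the uniform convergence in distribution $\Xntheta \unifdistTheta \Xtheta$ postulated in the corollary is the same statement as condition 2 of the theorem.

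Since both conditions of the corollary are the corresponding conditions of the theorem under the identification above, the equivalence follows immediately by invoking Theorem \ref{theorem_equivalence_weak_distrib_conv}. There is no real obstacle here; the only thing worth flagging is that this translation crucially uses the assumption (already fixed in the preliminaries) that the limiting random vectors $\Xtheta$ have continuous distribution, so that the hypotheses of the theorem are genuinely in force.
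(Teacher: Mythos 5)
Your proposal is correct and matches the paper exactly: the paper states that Corollary \ref{corollary_equivalence_weak_distrib_conv} follows immediately from Theorem \ref{theorem_equivalence_weak_distrib_conv}, which is precisely the identification $\muntheta = \Pxntheta$, $\mutheta = \Pxtheta$, $\mu = Q$ that you spell out. Your added observation that $F_{\Xtheta}(\pvec{x}) = \Pxtheta\bbauf(-\infty,\pvec{x}]\bbzu$ is the only verification needed, and it is the same (implicit) step the paper relies on.
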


\begin{proof} [\sl Proof of Theorem \ref{theorem_equivalence_weak_distrib_conv}]
	That 1.\ implies 2.\ follows immediately from 7.\ of Theorem \ref{lemma:uniform_portmanteau}, since the sets 
	$$	A_\pvec{x}=(-\infty,\pvec{x}], \quad \pvec{x}\in \R^d		$$
	are such that $\sup_{\vartheta \in \Theta} \mutheta(\partial A_\pvec{x}) =0,$ due to continuity of $(\mutheta)_{\vartheta\in\Theta}.$
	Thus, 
	$$	\sup_{\vartheta \in \Theta} \babs F_{\vartheta,n}(\pvec{x}) -  F_{\vartheta}(\pvec{x} ) \babs = 	\sup_{\vartheta \in \Theta} \babs \muntheta(A_\pvec{x}) -  \mutheta(A_\pvec{x}) \babs =o(1).	$$
	To verify that 2.\ implies 1.\ we make use of Lemma \ref{lemma_generator_equivalence} by considering 
	$$	\tilde \B = \{	(\pvec{a},\pvec{b}) \ | \ \pvec{a}<\pvec{b}, \, \pvec{a},\pvec{b}\in \R^d		\}		$$
	which satisfies the assumptions of Lemma \ref{lemma_generator_equivalence} if $\A$ is the Borel-$\sigma$-algebra on $\R^d.$
	Note that 
	$$	\muntheta\bbauf(\pvec{a},\pvec{b})\bbzu= F_{\vartheta,n}(\pvec{b}-) - F_{\vartheta,n}(\pvec{a}), \quad \mbox{and} \quad \mutheta\bbauf(\pvec{a},\pvec{b})\bbzu= F_{\vartheta}(\pvec{b}) - 		F_{\vartheta}(\pvec{a}),	$$
	where $F_{\vartheta,n}(\pvec{b}-) = \lim_{\pvec{x} \nearrow \pvec{b}} F_{\vartheta,n}(\pvec{x}).$
	Thus, it suffices to show that 
	$$	\sup_{\vartheta \in \Theta} \babs F_{\vartheta,n}(\pvec{x}-) -  F_{\vartheta}(\pvec{x} ) \babs=o(1), \quad \forall \pvec{x} \in \R^d.		$$	
	For this purpose, firstly obtain by monotonicity
	\begin{align*}
	\limsup_n \sup_{\vartheta \in \Theta} \bbauf F_{\vartheta,n}(\pvec{x}-) -  F_{\vartheta}(\pvec{x} )\bbzu
	&\leq \limsup_n \sup_{\vartheta \in \Theta} \bbauf F_{\vartheta,n}(\pvec{x}) -  F_{\vartheta}(\pvec{x} )\bbzu=0.
	\end{align*}
	Secondly, we have 
	\begin{align*}
	\liminf_n \inf_{\vartheta \in \Theta} \bbauf F_{\vartheta,n}(\pvec{x}-) -  F_{\vartheta}(\pvec{x} )\bbzu \geq 0,
	\end{align*}
	which concludes the proof.
	Indeed, let $\varepsilon>0$ then there exists by the  uniform absolute continuity of $(\mutheta)_{\vartheta\in\Theta}$ over $\Theta$ w.r.t\ $\mu$ a  $\delta>0$ such that
	$$	F_{\vartheta}\bbauf\pvec{x}-\delta(1,\ldots,1)^T \bbzu \geq F_{\vartheta}(\pvec{x}) - \varepsilon, \quad \forall \vartheta \in \Theta,	$$
	due to continuity of $\mu.$
	Thus by 2.\ there exists $N\in \N$ such that
	$ F_{\vartheta,n}\bbauf\pvec{x}-\delta(1,\ldots,1)^T\bbzu \geq F_{\vartheta}(\pvec{x}) - 2 \varepsilon$ for any $n\geq N$ and any $\vartheta \in \Theta.$
	By monotonicity obtain
	\begin{align*}
	\liminf_n \inf_{\vartheta \in \Theta} \bbauf F_{\vartheta,n}(\pvec{x}-) -  F_{\vartheta}(\pvec{x} ) \bbzu
	\geq - 2 \varepsilon,
	\end{align*}
	which yields the assertion by considering $\varepsilon \to 0.$
\end{proof}


\smalltitle{Approximation result}

The next theorem is most useful to verify uniform convergence in distribution for two sequences of random vectors if the uniform distributional limit for one sequence is known and the distance between the random vectors is uniformly tending to zero in probability.

\begin{satz} \label{theorem_uniform_approximation}
	Let for any $\vartheta \in \Theta,$ $(\Yntheta)_{n\in\N}$   be a sequence of real-valued random vectors in $\R^d.$ 
	Suppose that $  \Xntheta \unifdistTheta \Xtheta  $ and $  \norm{ \Yntheta - \Xntheta} = o_{P,\Theta}(1)$ and in addition  $(\Pxtheta)_{\vartheta \in \Theta}$ is uniformly absolutely continuous over $\Theta$ with respect to some continuous probability measure $Q.$
	Then,
	$$	 \Yntheta \unifdistTheta \Xtheta.	$$
\end{satz}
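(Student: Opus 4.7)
The plan is to convert the conclusion into the language of cumulative distribution functions (using the definition of $\unifdistTheta$) and then establish uniform convergence of the distribution functions of $\Yntheta$ through a standard sandwich argument at perturbed points. Fix $\pvec{x}\in\R^d$ and $\varepsilon>0$. For any $\delta>0$, the componentwise inclusions
\begin{align*}
\{\Yntheta \leq \pvec{x}\} &\subseteq \{\Xntheta \leq \pvec{x} + \delta(1,\ldots,1)^T\}\cup\{\norm{\Yntheta-\Xntheta}>\delta\}, \\
\{\Xntheta \leq \pvec{x}-\delta(1,\ldots,1)^T\} &\subseteq \{\Yntheta \leq \pvec{x}\}\cup\{\norm{\Yntheta-\Xntheta}>\delta\},
\end{align*}
yield the pointwise sandwich
$$ F_{\Xntheta}\bbauf\pvec{x}-\delta(1,\ldots,1)^T\bbzu - p_n^\vartheta(\delta) \;\leq\; F_{\Yntheta}(\pvec{x}) \;\leq\; F_{\Xntheta}\bbauf\pvec{x}+\delta(1,\ldots,1)^T\bbzu + p_n^\vartheta(\delta),$$
where $p_n^\vartheta(\delta):=P(\norm{\Yntheta-\Xntheta}>\delta)$.

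Next, I would choose $\delta>0$ small enough so that
$$\sup_{\vartheta\in\Theta}\babs F_{\Xtheta}\bbauf\pvec{x}\pm\delta(1,\ldots,1)^T\bbzu - F_{\Xtheta}(\pvec{x})\babs < \varepsilon.$$
This is the only nontrivial step, and it is exactly where the uniform absolute continuity hypothesis is indispensable: the two ``slab'' sets $(-\infty,\pvec{x}+\delta(1,\ldots,1)^T]\setminus(-\infty,\pvec{x}]$ and $(-\infty,\pvec{x}]\setminus(-\infty,\pvec{x}-\delta(1,\ldots,1)^T]$ have $Q$-measure tending to zero as $\delta\to 0$ by continuity of $Q$, and then uniform absolute continuity of $(\Pxtheta)_{\vartheta\in\Theta}$ with respect to $Q$ gives the required $\delta$ uniformly in $\vartheta$. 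Mere pointwise continuity of the individual $F_{\Xtheta}$ would not suffice here, which is the conceptual core of the result.

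For this chosen $\delta$, the assumption $\Xntheta\unifdistTheta\Xtheta$ applied at the two points $\pvec{x}\pm\delta(1,\ldots,1)^T$ gives $\sup_{\vartheta\in\Theta}|F_{\Xntheta}(\pvec{x}\pm\delta(1,\ldots,1)^T)-F_{\Xtheta}(\pvec{x}\pm\delta(1,\ldots,1)^T)|=o(1)$, while the hypothesis $\norm{\Yntheta-\Xntheta}=o_{P,\Theta}(1)$ yields $\sup_{\vartheta\in\Theta}p_n^\vartheta(\delta)=o(1)$. Combining the sandwich with these two uniform estimates produces
$$\limsup_n \sup_{\vartheta\in\Theta}\babs F_{\Yntheta}(\pvec{x})-F_{\Xtheta}(\pvec{x})\babs \leq 2\varepsilon,$$
and letting $\varepsilon\to 0$ concludes the proof. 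The whole argument is essentially a uniform rerun of the classical Slutzky-type approximation, with all three ingredients (uniform CDF convergence of $\Xntheta$, uniform smallness of the perturbation probability, and uniform equi-continuity of the family $\{F_{\Xtheta}\}_{\vartheta\in\Theta}$) being applied simultaneously.
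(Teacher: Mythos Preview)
Your argument is correct and follows a genuinely different route from the paper. The paper does not work with distribution functions directly: it tests against bounded Lipschitz functions $g$, bounding $\sup_\vartheta\big|\int g\,\d P_{\Yntheta}-\int g\,\d P_{\Xntheta}\big|$ by $L\varepsilon+2\norminf{g}\sup_\vartheta P_\vartheta(\norm{\Xntheta-\Yntheta}>\varepsilon)$, adds the triangle term $\sup_\vartheta|\int g\,\d\Pxntheta-\int g\,\d\Pxtheta|$, and then invokes item~2 of the uniform Portmanteau theorem together with Corollary~\ref{corollary_equivalence_weak_distrib_conv} to return to distribution functions. In that approach the uniform absolute continuity hypothesis enters only indirectly, through the equivalence results already established. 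Your sandwich argument at the perturbed points $\pvec{x}\pm\delta(1,\ldots,1)^T$ instead uses the uniform absolute continuity hypothesis explicitly, to obtain what is effectively uniform equicontinuity of the family $\{F_{\Xtheta}\}_{\vartheta\in\Theta}$ at $\pvec{x}$; this is exactly the device the paper employs in the proof of Theorem~\ref{theorem_equivalence_weak_distrib_conv}. The trade-off is that the paper's proof is shorter once the Portmanteau machinery is available, while yours is self-contained and shows transparently where the structural assumption on $(\Pxtheta)_{\vartheta\in\Theta}$ is consumed.
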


\begin{proof} [\sl Proof of Theorem \ref{theorem_uniform_approximation}]
	Let $g:\R^d \to \R$ be a bounded, Lipschitz-continuous function with Lipschitz constant $L>0.$
	Let $\varepsilon>0,$ then
	\begin{align*}
	\sup_{\vartheta \in \Theta} \babs	\int g \, \d\Pyntheta -\int g \, \d\Pxntheta	\babs
	\leq L \, \varepsilon + 2 \norminf{g} \, \sup_{\vartheta \in \Theta} P_\vartheta\bbauf \norm{\Xntheta-\Yntheta}>\varepsilon \bbzu.
	\end{align*}
	Thus,
	\small
	\begin{align*}
	\sup_{\vartheta \in \Theta} \babs	\int g \, \d\Pyntheta &-\int g \, \d\Pxtheta	\babs \\
	&\leq L \, \varepsilon + 2 \, \norminf{g} \, \sup_{\vartheta \in \Theta} P_\vartheta\bbauf \norm{\Xntheta-\Yntheta}>\varepsilon \bbzu + \sup_{\vartheta \in \Theta} \babs	\int g \, \d\Pxntheta -\int g \, \d\Pxtheta	\babs.
	\end{align*}
	\normalsize
	The second term on the right-hand side of the latter inequality tends to zero, as does the third by 2.\ of Theorem \ref{lemma:uniform_portmanteau}.
	Considering $\varepsilon \to 0$ yields that $\sup_{\vartheta \in \Theta} |	\int g \, d\Pyntheta -\int g \, \d\Pxtheta	|=o(1).$
	Now, Corollary \ref{corollary_equivalence_weak_distrib_conv} and 2.\ of Theorem \ref{lemma:uniform_portmanteau} complete the proof.
\end{proof}


\section{Uniform versions of L\'evy's Continuity Theorem and Cram\'er-Wold Theorem} \label{sec_levy_cramer_wold}

We start with an auxiliary result giving a sufficient condition to verify weak convergence for a family of probability measures of random variables.

\begin{lemma} \label{lemma_auxiliary_equivalence_weak_cf}
	Suppose $(\Pxtheta)_{\vartheta \in \Theta}$ is uniformly absolutely continuous over $\Theta$ with respect to some continuous probability measure $Q.$
	Let $Y$ be some random vector in $\R^d.$
	If 
	$$  P_{\Xntheta+\sigma Y} \unifweakTheta P_{\Xtheta+\sigma Y}, \quad \forall \sigma>0		$$
	then
	$  \Pxntheta \unifweakTheta \Pxtheta.		$
\end{lemma}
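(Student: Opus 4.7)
The plan is to verify condition 2.\ of Theorem \ref{lemma:uniform_portmanteau} for the family $(\Pxntheta)_{\vartheta \in \Theta}$ and $(\Pxtheta)_{\vartheta \in \Theta}$, since uniform absolute continuity of the latter with respect to $Q$ is part of the assumption, so this equivalence immediately yields $\Pxntheta \unifweakTheta \Pxtheta$. Let thus $g:\R^d \to \R$ be bounded Lipschitz with $\norminf{g}\leq M$ and Lipschitz constant $L>0$, and let $Y$ be independent of everything in sight (or one may work on the product space; independence plays no role in the bound below).

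The key step is the decomposition, valid for any $\sigma>0$,
\begin{align*}
\int g \, \d\Pxntheta - \int g \, \d\Pxtheta
&= \bbauf \E g(\Xntheta) - \E g(\Xntheta + \sigma Y) \bbzu \\
&\quad + \bbauf \E g(\Xntheta + \sigma Y) - \E g(\Xtheta + \sigma Y) \bbzu \\
&\quad + \bbauf \E g(\Xtheta + \sigma Y) - \E g(\Xtheta) \bbzu.
\end{align*}
For the first and third terms I use the pointwise estimate
$$ | g(x) - g(x+\sigma y) | \leq \min\bbauf 2M,\; L\sigma \norm{y}\bbzu, $$
which gives, uniformly in $n$ and $\vartheta$,
$$ \babs \E g(\Xntheta) - \E g(\Xntheta + \sigma Y) \babs \leq \E\, \min\bbauf 2M,\; L\sigma \norm{Y}\bbzu, $$
and the same bound for the third term. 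Since $\min(2M, L\sigma \norm{Y}) \leq 2M$ and converges to $0$ almost surely as $\sigma \searrow 0$, the dominated convergence theorem ensures this bound tends to $0$ as $\sigma \searrow 0$, \emph{independently} of $n$ and $\vartheta$. The second term is controlled by the hypothesis $P_{\Xntheta + \sigma Y} \unifweakTheta P_{\Xtheta + \sigma Y}$: the bounded continuous test function $g$ yields $\sup_{\vartheta\in \Theta} \babs \E g(\Xntheta + \sigma Y) - \E g(\Xtheta + \sigma Y) \babs = o(1)$ as $n\to \infty$ for the chosen $\sigma$.

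Combining, for any $\varepsilon>0$ I first choose $\sigma>0$ small enough that each of the two ``smoothing'' terms is bounded by $\varepsilon/3$ uniformly in $n,\vartheta$, and then choose $N$ such that the middle term is bounded by $\varepsilon/3$ for all $n\geq N$ uniformly in $\vartheta$. This shows
$$ \sup_{\vartheta \in \Theta} \babs \int g\, \d\Pxntheta - \int g\, \d\Pxtheta \babs = o(1), $$
which is exactly condition 2.\ of Theorem \ref{lemma:uniform_portmanteau}, completing the proof.

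The only delicate point is keeping the smoothing error uniform in $n$ and $\vartheta$ without imposing any integrability on $Y$; the truncated bound $\min(2M, L\sigma \norm{Y})$ together with dominated convergence is exactly what makes this work. No independence structure or characteristic function machinery is needed at this step, which is what makes the lemma a convenient stepping stone toward a uniform L\'evy continuity theorem.
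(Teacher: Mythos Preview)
Your argument is correct and follows the same three-term decomposition as the paper: split $\int g\,\d\Pxntheta - \int g\,\d\Pxtheta$ into two ``smoothing'' errors plus the middle term controlled by the hypothesis, choose $\sigma$ small to make the smoothing errors small uniformly in $n,\vartheta$, then send $n\to\infty$.

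The execution differs in a way worth noting. The paper works with a general bounded continuous $g$, asserts a $\delta$ of uniform continuity, and controls the smoothing error by splitting on the event $\{\sigma\|Y\|<\delta\}$; it then invokes uniform absolute continuity to justify that $P(\|Y\|\geq \delta/\sigma)$ can be made small. Your version is cleaner on both counts: by testing only against bounded Lipschitz $g$ (legitimate via condition 2 of Theorem \ref{lemma:uniform_portmanteau}) you get the pointwise bound $\min(2M,L\sigma\|y\|)$ directly, and dominated convergence handles $\sigma\to 0$ without any appeal to uniform absolute continuity or to tail behavior of $Y$ beyond $\|Y\|<\infty$ a.s. This sidesteps the paper's implicit uniform-continuity claim (not valid for arbitrary bounded continuous $g$ on $\R^d$) and its somewhat opaque use of the absolute-continuity assumption at this step.
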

\begin{proof} [\sl Proof of Lemma \ref{lemma_auxiliary_equivalence_weak_cf}]
	Let $g:\R^d \to \R$ be a bounded and continuous function.
	For all $\varepsilon>0$ there exists a $\delta>0$ such that if $  \norm{\pvec{x}-\pvec{z}}<\delta$ then $|g(\pvec{x})-g(\pvec{z})|<\varepsilon/6.$
	Let $\sigma$ be small enough such that $ \sup_{  \vartheta \in \Theta } \Pxtheta \bbauf \norm{Y}  \geq \delta \sigma\inv \bbzu < \nicefrac{\varepsilon}{12 \norminf{g}}.$
	Such a $\sigma$ exists due to the uniform absolute continuity of $\Xntheta$ w.r.t. $Q.$ 
	Thus,
	\begin{align*}
	\sup_{\vartheta \in \Theta} \babs \int g \, \d \Pxntheta - \int g \, \d  P_{\Xntheta+\sigma Y}  \babs
	&\leq \sup_{\vartheta \in \Theta} \babs \int 1_{ \{\sigma \norm{Y} < \delta\}} \, g \, \d (\Pxntheta -    P_{\Xntheta+\sigma Y})  \babs \\
	&\quad + \sup_{\vartheta \in \Theta}  \babs \int 1_{ \{\sigma \norm{Y} \geq \delta\}} \, g  \, \d\ (\Pxntheta -    P_{\Xntheta+\sigma Y}) \babs \\
	&\leq \varepsilon/6 + 2 \, \norminf{g}\ \sup_{  \vartheta \in \Theta } \Pxtheta \bbauf \norm{Y}  \geq \delta \sigma\inv\bbzu \leq \varepsilon/3
	\end{align*}
	Similarly, 
	$$		\sup_{\vartheta \in \Theta} \babs \int g \, \d \Pxtheta - \int g \, \d  P_{\Xtheta+\sigma Y}  \babs \leq \varepsilon/3.	$$
	Next, by assumption there exists $N\in \N$ such that
	$$		\sup_{\vartheta \in \Theta} \babs \int g \,\d P_{\Xntheta+\sigma Y} - \int g \,\d P_{\Xtheta+\sigma Y} \babs \leq \varepsilon/3, \quad \forall n \geq N.	$$
	Thus, by triangle inequality 
	\begin{align*}
	\sup_{\vartheta \in \Theta} \babs \int g \, \d \Pxntheta - \int g \, \d \Pxtheta \babs
	&\leq \varepsilon, \quad \forall n \geq N,
	\end{align*}
	which concludes the lemma.
\end{proof}


The next theorem is sufficient to derive a uniform version of L\'evy's continuity theorem.

\begin{satz} \label{theorem_equivalence_weak_cf}
	Assume that $(\Pxtheta)_{\vartheta \in \Theta}$ is uniformly absolutely continuous over $\Theta$ with respect to some continuous probability measure $Q.$
	The following two statements are equivalent:
	\begin{enumerate}
		\item $  \Pxntheta \unifweakTheta \Pxtheta		$;
		\item $	 \sup_{\vartheta\in \Theta} |	\phi_{\Xntheta}(\pvec{t}) - \phi_{\Xtheta}(\pvec{t})	|=o(1), \quad \forall \pvec{t} \in \R^d.		$
		
	\end{enumerate}
\end{satz}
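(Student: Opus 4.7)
My plan is to treat the two implications separately, the substantive direction being $2\Rightarrow 1$. For $1\Rightarrow 2$, note that for each fixed $\pvec{t}\in\R^d$ the real and imaginary parts $\pvec{x}\mapsto\cos(\pvec{t}^T\pvec{x})$ and $\pvec{x}\mapsto\sin(\pvec{t}^T\pvec{x})$ of the integrand defining $\phi_{\Xntheta}$ are bounded and continuous, so the assertion follows directly from the defining property of $\unifweakTheta$ (equivalently, statement 2 of Theorem \ref{lemma:uniform_portmanteau}).

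For $2\Rightarrow 1$, I would apply Lemma \ref{lemma_auxiliary_equivalence_weak_cf} with $Y$ a $d$-dimensional standard normal random vector independent of all the $\Xntheta$ and $\Xtheta$; it then suffices to prove $P_{\Xntheta+\sigma Y}\unifweakTheta P_{\Xtheta+\sigma Y}$ for every $\sigma>0$. The Gaussian smoothing is the standard trick that makes the characteristic functions $\phi_{\Xntheta}(\pvec{t})\,e^{-\sigma^2\norm{\pvec{t}}^2/2}$ and $\phi_{\Xtheta}(\pvec{t})\,e^{-\sigma^2\norm{\pvec{t}}^2/2}$ integrable in $\pvec{t}$. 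Fourier inversion then yields densities $h_{\vartheta,n}$ and $h_\vartheta$ of $\Xntheta+\sigma Y$ and $\Xtheta+\sigma Y$ that can be compared directly: taking absolute values inside the inversion integral and the supremum over $\vartheta$ gives $\sup_{\vartheta\in\Theta}\norminf{h_{\vartheta,n}-h_\vartheta}\leq(2\pi)^{-d}\int_{\R^d}\sup_{\vartheta\in\Theta}|\phi_{\Xntheta}(\pvec{t})-\phi_{\Xtheta}(\pvec{t})|\,e^{-\sigma^2\norm{\pvec{t}}^2/2}\,d\pvec{t}$. The integrand on the right is bounded by the integrable function $2e^{-\sigma^2\norm{\pvec{t}}^2/2}$ and converges pointwise to zero by assumption 2, so dominated convergence yields $\sup_{\vartheta\in\Theta}\norminf{h_{\vartheta,n}-h_\vartheta}=o(1)$.

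The hard step is upgrading this uniform $L^\infty$ control to uniform $L^1$ convergence $\sup_\vartheta\normeins{h_{\vartheta,n}-h_\vartheta}=o(1)$, since tightness of $(\Pxntheta)_{\vartheta,n}$ is not assumed, only the uniform absolute continuity of $(\Pxtheta)_\vartheta$ with respect to $Q$. I would bridge this asymmetry with Scheff\'e's identity $\normeins{h_{\vartheta,n}-h_\vartheta}=2\int(h_\vartheta-h_{\vartheta,n})^+\,d\pvec{x}$ and the pointwise bound $(h_\vartheta-h_{\vartheta,n})^+\leq h_\vartheta\mathbf{1}_{B_R^c}+\norminf{h_{\vartheta,n}-h_\vartheta}\,\mathbf{1}_{B_R}$, giving $\sup_\vartheta\normeins{h_{\vartheta,n}-h_\vartheta}\leq 2\sup_\vartheta\int_{B_R^c}h_\vartheta\,d\pvec{x}+2\,\vol(B_R)\,\sup_\vartheta\norminf{h_{\vartheta,n}-h_\vartheta}$. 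The first summand is controlled via $\int_{B_R^c}h_\vartheta\,d\pvec{x}\leq\Pxtheta(B_{R/2}^c)+\Prob(\sigma\norm{Y}>R/2)$, and tightness of the probability measure $Q$ on $\R^d$ combined with uniform absolute continuity of $(\Pxtheta)_\vartheta$ with respect to $Q$ forces $\sup_\vartheta\Pxtheta(B_{R/2}^c)\to 0$ as $R\to\infty$. Choosing $R$ large first and then $n$ large yields the desired uniform $L^1$ bound.

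To conclude, for any bounded continuous $g:\R^d\to\R$ one obtains $\sup_{\vartheta\in\Theta}\babs\int g\,dP_{\Xntheta+\sigma Y}-\int g\,dP_{\Xtheta+\sigma Y}\babs\leq\norminf{g}\,\sup_{\vartheta\in\Theta}\normeins{h_{\vartheta,n}-h_\vartheta}=o(1)$, so the hypothesis of Lemma \ref{lemma_auxiliary_equivalence_weak_cf} is verified for every $\sigma>0$, and the lemma delivers $\Pxntheta\unifweakTheta\Pxtheta$.
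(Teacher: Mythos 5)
Your proposal is correct, and in outline it follows the same route as the paper: both directions are handled identically at the start ($1\Rightarrow 2$ via real and imaginary parts; $2\Rightarrow 1$ via Gaussian smoothing, Fourier inversion of the convolved densities, and Lemma \ref{lemma_auxiliary_equivalence_weak_cf}). The difference lies in how the pointwise control $\sup_{\vartheta}\norminf{h_{\vartheta,n}-h_\vartheta}=o(1)$ is converted into control of $\sup_\vartheta\babs\int g\,\d P_{\Xntheta+\sigma Y}-\int g\,\d P_{\Xtheta+\sigma Y}\babs$. The paper bounds this quantity by $(2\pi\sigma)^{-d}\norminf{g}$ times a double integral $\int\int e^{-\norm{\pvec{t}}^2/2}\sup_\vartheta|\phi_{\Xntheta}(\sigma\inv\pvec{t})-\phi_{\Xtheta}(\sigma\inv\pvec{t})|\,\d\pvec{t}\,\d\pvec{y}$ and invokes dominated convergence; as written, the integrand no longer depends on $\pvec{y}$, so the outer $\d\pvec{y}$ integral over $\R^d$ is infinite and the display does not by itself yield the conclusion --- one genuinely needs $L^1$ control of $h_{\vartheta,n}-h_\vartheta$, not just $L^\infty$ control, since $g$ is merely bounded. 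This is exactly the step you isolate and repair: Scheff\'e's identity, the truncation to a ball $B_R$, and the observation that uniform absolute continuity of $(\Pxtheta)_{\vartheta\in\Theta}$ with respect to the tight measure $Q$ gives $\sup_\vartheta\Pxtheta(B_{R/2}^c)\to 0$. Your version is therefore more careful than the paper's at the one place where care is needed; the price is the extra tightness argument, but that argument uses only the stated hypothesis and nothing more. One cosmetic remark: the quantity $\vol(B_R)\,\sup_\vartheta\norminf{h_{\vartheta,n}-h_\vartheta}$ blows up in $R$, so the order of limits (first fix $R$ large to make the tail term small, then let $n\to\infty$, then let $R\to\infty$) is essential; you state this order correctly.
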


Corollary \ref{corollary_equivalence_weak_distrib_conv} and Theorem \ref{theorem_equivalence_weak_cf} imply the following uniform version of L\'evy's continuity theorem.

\begin{satz}[Uniform L\'evy's continuity theorem] \label{theorem:uniform_levy_continuiuty}
	Assume $(\Pxtheta)_{\vartheta \in \Theta}$ is uniformly absolutely continuous over $\Theta$ with respect to some continuous probability measure $Q.$
	Then the following two statements are equivalent:
	\begin{enumerate}
		\item $  \Xntheta \unifdistTheta \Xtheta  $;
		\item $	 \sup_{\vartheta\in \Theta} |	\phi_{\Xntheta}(\pvec{t}) - \phi_{\Xtheta}(\pvec{t})	|=o(1), \quad \forall \pvec{t} \in \R^d.		$
	\end{enumerate}
\end{satz}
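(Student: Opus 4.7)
The plan is a one-line chain of two equivalences that have already been established in the paper, both of which share the very hypothesis stated in the present theorem. Under the assumption that $(\Pxtheta)_{\vartheta \in \Theta}$ is uniformly absolutely continuous over $\Theta$ with respect to a continuous probability measure $Q$, Corollary \ref{corollary_equivalence_weak_distrib_conv} tells us that statement 1, namely $\Xntheta \unifdistTheta \Xtheta$, is equivalent to the uniform weak convergence $\Pxntheta \unifweakTheta \Pxtheta$ of the induced laws. This translates the distribution-function condition in statement 1 to a statement about the measures themselves.

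Next, I would invoke Theorem \ref{theorem_equivalence_weak_cf}, whose hypothesis is identical to the one already assumed here. It equivalates $\Pxntheta \unifweakTheta \Pxtheta$ with the uniform convergence of characteristic functions
\[
\sup_{\vartheta\in \Theta}\bigl|\phi_{\Xntheta}(\pvec{t})-\phi_{\Xtheta}(\pvec{t})\bigr|=o(1),\qquad \forall \pvec{t}\in\R^d,
\]
which is precisely statement 2. Concatenating the two biconditionals yields the equivalence claimed in Theorem \ref{theorem:uniform_levy_continuiuty}, so the proof collapses to citing these two results.

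There is no genuine obstacle left at this point of the paper, because the substantive content lies in Theorem \ref{theorem_equivalence_weak_cf}. Its proof is the place where a Gaussian smoothing argument via Lemma \ref{lemma_auxiliary_equivalence_weak_cf} reduces uniform weak convergence to the pointwise convergence of characteristic functions, and this is where uniform absolute continuity with respect to $Q$ is actually exploited. The only small bookkeeping point worth making explicit in the write-up is that the uniform absolute continuity hypothesis enters both cited results symmetrically, so no extra regularity assumption on $(\Xntheta)_{\vartheta \in \Theta}$ or $(\Xtheta)_{\vartheta \in \Theta}$ is required beyond the stated one.
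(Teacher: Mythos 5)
Your proposal is correct and matches the paper's own argument exactly: the paper states that Theorem \ref{theorem:uniform_levy_continuiuty} follows immediately by combining Corollary \ref{corollary_equivalence_weak_distrib_conv} with Theorem \ref{theorem_equivalence_weak_cf}, which is precisely the two-step chain of equivalences you describe. Nothing further is needed.
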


\begin{proof} [\sl Proof of Theorem \ref{theorem_equivalence_weak_cf}]
	Suppose $  \Pxntheta \unifweakTheta \Pxtheta$ holds.
	The implication $$\sup_{\vartheta\in \Theta} |	\phi_{\Xntheta}(\pvec{t}) - \phi_{\Xtheta}(\pvec{t})	|=o(1), \quad \forall \pvec{t} \in \R^d		$$ follows immediately by splitting $\exp(i\pvec{t}^T X)$ into its real and imaginary part, which are both bounded continuous functions.

	Otherwise, assume that $	 \sup_{\vartheta\in \Theta} |	\phi_{\Xntheta}(\pvec{t}) - \phi_{\Xtheta}(\pvec{t})	|=o(1),$  for any $\pvec{t} \in \R^d.$
	Let $\sigma>0$ and let $Y \sim N_d(0,I_d)$ be independent of $\Xntheta$ for any $\vartheta\in \Theta$ and any $n \in \N.$
	Then, for any $g:\R^d \to \R$ which is bounded and continuous obtain with Fubini's theorem
	\begin{align*}
	\int g \,\d P_{\Xntheta + \sigma Y} 
	&= \int \int g( \pvec{y})  \varphi_{d,\sigma^2 I_d}(\pvec{y}-\pvec{x}) \, \d \pvec{y} \,\d P_{\Xntheta}(\pvec{x}) \\
	&=: \int g( \pvec{y})  I_n^\vartheta ( \pvec{y}) \, \d \pvec{y},
	\end{align*}
	where $\varphi_{d,\sigma^2 I_d}$ is the density function of $N_d(0,\sigma^2 I_d)$ and $I_n^\vartheta ( \pvec{y}) = \int \varphi_{d,\sigma^2 I_d}(\pvec{y}-\pvec{x}) \,\d P_{\Xntheta}(\pvec{x}).$
	Similarly, $$ \int g \,\d P_{\Xtheta + \sigma Y} = \int g( \pvec{y})  I^\vartheta ( \pvec{y}) \, \d \pvec{y} $$ with 
	$$I^\vartheta ( \pvec{y})= \int \varphi_{d,\sigma^2 I_d}(\pvec{y}-\pvec{x}) \,\d P_{\Xtheta}(\pvec{x}).$$
	By means of  the inversion formula and a substitution
	$$ \varphi_{d,\sigma^2 I_d}(\pvec{y}-\pvec{x}) = (2\pi\sigma)^{-d} \int \exp( i \sigma\inv \pvec{t}^T(\pvec{y} -\pvec{x}) - \nicefrac{\norm{\pvec{t}}^2}{2}  ) \,\d \pvec{t}.	$$
	Thus, by Fubini's theorem 
	\begin{align*}
	I_n^\vartheta ( \pvec{y}) 
	&=	(2\pi\sigma)^{-d}  \int \exp( - i \sigma\inv \pvec{t}^T\pvec{y} - \nicefrac{\norm{\pvec{t}}^2}{2}  ) \int \exp( i \sigma\inv \pvec{t}^T \pvec{x} ) \,\d P_{\Xntheta}(\pvec{x})    \, \d \pvec{t} \\
	&= 	(2\pi\sigma)^{-d}   \int \exp( - i \sigma\inv \pvec{t}^T\pvec{y} - \nicefrac{\norm{\pvec{t}}^2}{2}  ) \,  \phi_{\Xntheta}(\sigma\inv \pvec{t})    \, \d \pvec{t}
	\end{align*}
	and analogously $$  I^\vartheta( \pvec{y}) =  (2\pi\sigma)^{-d}   \int \exp( - i \sigma\inv \pvec{t}^T\pvec{y} - \nicefrac{\norm{\pvec{t}}^2}{2}  )\, \phi_{\Xtheta}(\sigma\inv \pvec{t})    \, \d \pvec{t}. $$
	Hence,
	\small
	\begin{align*}
	\sup_{\vartheta \in \Theta}&\babs\int g \,\d P_{\Xntheta + \sigma Y}  - 	\int g \,\d P_{\Xtheta + \sigma Y} 	\babs \\
	&\leq (2\pi\sigma)^{-d} \norminf{g} \int  \int |\exp( - i \sigma\inv \pvec{t}^T\pvec{y} - \nicefrac{\norm{\pvec{t}}^2}{2}  )| \, 	\sup_{\vartheta \in \Theta} \big| \phi_{\Xntheta}(\sigma\inv \pvec{t}) - \phi_{\Xtheta}(\sigma\inv \pvec{t}) \big|   \,\d \pvec{t} \, \d \pvec{y},
	\end{align*}
	\normalsize
	which tends to zero for $n \to \infty$ by dominated convergence.
	This implies $$  P_{\Xntheta+\sigma Y} \unifweakTheta P_{\Xtheta+\sigma Y}, \quad \forall \sigma>0		$$
	and the proof is finished by using Lemma \ref{lemma_auxiliary_equivalence_weak_cf}.
\end{proof}


The following uniform version of the Cram\'er-Wold Theorem follows immediately from Theorem \ref{theorem:uniform_levy_continuiuty}. 
\begin{satz} [Cram\'er-Wold Theorem] \label{theorem:unif_cramer_wold}
	Assume $(\Pxtheta)_{\vartheta \in \Theta}$ is uniformly absolutely continuous over $\Theta$ with respect to some continuous probability measure $Q.$
	If $ \pvec{a}^T \Xntheta \unifdistTheta \pvec{a}^T\Xtheta $ for any $\pvec{a} \in \R^d,$ then
	$\Xntheta \unifdistTheta \Xtheta.$
\end{satz}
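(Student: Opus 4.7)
The plan is to reduce the uniform Cram\'er--Wold theorem to the uniform L\'evy continuity theorem (Theorem~\ref{theorem:uniform_levy_continuiuty}) by routing through characteristic functions, exactly mirroring the classical argument. The overall strategy has two halves: first, use the forward implication of the uniform L\'evy theorem on each one-dimensional projection $\pvec{a}^T\Xntheta$ to obtain uniform convergence of characteristic functions on all of $\R^d$; second, use the backward implication of the uniform L\'evy theorem, now applied in $\R^d$, to recover uniform convergence in distribution of the vectors $\Xntheta$ themselves.

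More concretely, fix $\pvec{a}\in\R^d$. The hypothesis $\pvec{a}^T\Xntheta\unifdistTheta \pvec{a}^T\Xtheta$, combined with the forward direction of Theorem~\ref{theorem:uniform_levy_continuiuty} applied to this one-dimensional family, gives $\sup_{\vartheta\in\Theta}|\phi_{\pvec{a}^T\Xntheta}(s)-\phi_{\pvec{a}^T\Xtheta}(s)|=o(1)$ for every $s\in\R$. Specializing to $s=1$ and using the elementary identity $\phi_{\Xntheta}(\pvec{a})=\E[\exp(i\pvec{a}^T\Xntheta)]=\phi_{\pvec{a}^T\Xntheta}(1)$ (with the analog for $\Xtheta$), this reads $\sup_{\vartheta\in\Theta}|\phi_{\Xntheta}(\pvec{a})-\phi_{\Xtheta}(\pvec{a})|=o(1)$ for every $\pvec{a}\in\R^d$. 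This is precisely statement~2 of Theorem~\ref{theorem:uniform_levy_continuiuty} applied in $\R^d$; the assumed uniform absolute continuity of $(P_{\Xtheta})_{\vartheta\in\Theta}$ with respect to $Q$ supplies the hypothesis of that theorem, and its backward direction delivers the conclusion $\Xntheta\unifdistTheta\Xtheta$.

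The only delicate point I anticipate is justifying the very first invocation of Theorem~\ref{theorem:uniform_levy_continuiuty} in $\R$: it requires uniform absolute continuity of $(P_{\pvec{a}^T\Xtheta})_{\vartheta\in\Theta}$ with respect to some continuous probability measure on $\R$. For $\pvec{a}\neq 0$ this should follow by taking the pushforward $Q\circ(\pvec{a}^T)^{-1}$ as reference measure; uniform absolute continuity transfers under pushforward, and atomlessness must be inferred from the implicit assumption that each limit $\pvec{a}^T\Xtheta$ is itself continuous (which is needed for the hypothesis $\unifdistTheta$ to fit the framework of Section~\ref{sec_definitions} in the first place). The case $\pvec{a}=0$ is trivial since both sides vanish identically. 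Once this bookkeeping is in place, the two applications of Theorem~\ref{theorem:uniform_levy_continuiuty} combine cleanly to yield the claim.
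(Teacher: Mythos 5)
Your proof is correct and is essentially the paper's argument: the paper gives no proof beyond the remark that the result ``follows immediately from Theorem~\ref{theorem:uniform_levy_continuiuty},'' and the two-step route you spell out (forward direction of uniform L\'evy on each projection $\pvec{a}^T\Xntheta$ evaluated at $s=1$, then the backward direction in $\R^d$) is exactly what is intended. You in fact go further than the paper by flagging the need for uniform absolute continuity of the projected families with respect to a continuous reference measure on $\R$ --- a bookkeeping point the paper silently omits --- and your proposed fix via the pushforward of $Q$ (discarding its atoms, which carry no mass for the atomless limits $\pvec{a}^T\Xtheta$) goes through.
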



\section{Uniform versions of the Continuous Mapping Theorem and Slutzky's Theorem} \label{sec_cmt_slutzky}

\begin{lemma} \label{lemma_auxiliary_slutzky}
	Let for any $\vartheta \in \Theta,$ $(\Yntheta)_{n\in\N}$   be a sequence of real-valued random vectors in $\R^d$ and $(\pvec{c}^\vartheta)_{\vartheta \in \Theta}$ be deterministic real vectors in $\R^d$ with $\Yntheta = \pvec{c}^\vartheta + o_{P,\Theta}(1).$
	Furthermore, 	suppose $(\Pxtheta)_{\vartheta \in \Theta}$ is uniformly absolutely continuous over $\Theta$ with respect to some continuous probability measure $Q,$ and $  \Xntheta \unifdistTheta \Xtheta .$
	Then, it holds that
	$$	 (\Xntheta,\Yntheta) \unifdistTheta (\Xtheta,\pvec{c^\vartheta}).	$$
\end{lemma}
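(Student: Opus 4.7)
The plan is to pass through the intermediate random vector $(\Xntheta,\pvec{c}^\vartheta)$: first establish $(\Xntheta,\pvec{c}^\vartheta) \unifdistTheta (\Xtheta,\pvec{c}^\vartheta)$ using only the hypothesis on the marginal, and then transfer this to $(\Xntheta,\Yntheta)$ by a direct CDF-bracketing argument that exploits $\Yntheta - \pvec{c}^\vartheta = o_{P,\Theta}(1)$. Since $\pvec{c}^\vartheta$ is deterministic, the joint distribution function factorises pointwise as $F_{(\Xntheta,\pvec{c}^\vartheta)}(\pvec{x},\pvec{y}) = \FXntheta(\pvec{x})\,\ind_{\{\pvec{c}^\vartheta\leq\pvec{y}\}}$ (componentwise ordering), and analogously for the limit, so that
$$ \sup_{\vartheta\in\Theta}\babs F_{(\Xntheta,\pvec{c}^\vartheta)}(\pvec{x},\pvec{y}) - F_{(\Xtheta,\pvec{c}^\vartheta)}(\pvec{x},\pvec{y})\babs \leq \sup_{\vartheta\in\Theta}\babs \FXntheta(\pvec{x})-\FXtheta(\pvec{x})\babs = o(1) $$
directly from the assumption $\Xntheta\unifdistTheta\Xtheta$.

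For the second step, fix $(\pvec{x},\pvec{y})\in\R^{2d}$ and $\delta>0$, and set $E_n^\vartheta:=\{\norm{\Yntheta-\pvec{c}^\vartheta}\leq\delta\}$; the stochastic approximation hypothesis gives $\sup_{\vartheta\in\Theta} P_\vartheta((E_n^\vartheta)^c)=o(1)$. On $E_n^\vartheta$ one has the coordinatewise implications $\pvec{c}^\vartheta\leq\pvec{y}-\delta\mathbf{1} \Rightarrow \Yntheta\leq\pvec{y}$ and $\pvec{c}^\vartheta\not\leq\pvec{y}+\delta\mathbf{1} \Rightarrow \Yntheta\not\leq\pvec{y}$, which produce the uniform sandwich
\begin{align*}
\FXntheta(\pvec{x})\,\ind_{\{\pvec{c}^\vartheta\leq\pvec{y}-\delta\mathbf{1}\}} - P_\vartheta\bbauf(E_n^\vartheta)^c\bbzu &\leq P_\vartheta\bbauf\Xntheta\leq\pvec{x},\Yntheta\leq\pvec{y}\bbzu\\
&\leq \FXntheta(\pvec{x})\,\ind_{\{\pvec{c}^\vartheta\leq\pvec{y}+\delta\mathbf{1}\}} + P_\vartheta\bbauf(E_n^\vartheta)^c\bbzu.
\end{align*}
Sending $n\to\infty$ first, both outer terms converge uniformly in $\vartheta$ (the marginal term by the first step, the probability of $(E_n^\vartheta)^c$ by hypothesis), and a subsequent passage $\delta\searrow 0$ collapses the two envelopes to the common target value $\FXtheta(\pvec{x})\,\ind_{\{\pvec{c}^\vartheta\leq\pvec{y}\}} = F_{(\Xtheta,\pvec{c}^\vartheta)}(\pvec{x},\pvec{y})$.

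The main obstacle is that Theorem \ref{theorem_uniform_approximation} cannot be invoked as a black box: the target $(\Xtheta,\pvec{c}^\vartheta)$ is degenerate in its last $d$ coordinates and hence fails the uniform absolute continuity hypothesis required there. This forces the bracketing to be carried out by hand, and also reveals the delicate point in the $\delta\searrow 0$ limit. The outer bracketing indicators $\ind_{\{\pvec{c}^\vartheta\leq\pvec{y}\pm\delta\mathbf{1}\}}$ squeeze to $\ind_{\{\pvec{c}^\vartheta\leq\pvec{y}\}}$ exactly when $\pvec{y}_i\neq\pvec{c}^\vartheta_i$ for every coordinate $i$ and every $\vartheta\in\Theta$, i.e.\ at joint continuity points of the family $(F_{(\Xtheta,\pvec{c}^\vartheta)})_{\vartheta\in\Theta}$. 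At such points the squeeze is genuinely uniform and yields the asserted convergence; this is the classical continuity-point caveat for distributional convergence to a partially degenerate limit and is exactly what is needed for the uniform continuous mapping and Slutzky applications developed in the remainder of Section \ref{sec_cmt_slutzky}.
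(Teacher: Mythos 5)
Your first step coincides with the paper's: both factor the joint distribution function of $(\Xntheta,\pvec{c}^\vartheta)$ and reduce to the marginal convergence. Where you diverge is the transfer from $(\Xntheta,\pvec{c}^\vartheta)$ to $(\Xntheta,\Yntheta)$: the paper observes $\norm{(\Xntheta,\Yntheta)-(\Xntheta,\pvec{c}^\vartheta)}=o_{P,\Theta}(1)$ and invokes Theorem \ref{theorem_uniform_approximation} as a black box, whereas you refuse to do so and carry out the CDF bracketing by hand. Your refusal is justified, and your route is the more careful one. The hypothesis of Theorem \ref{theorem_uniform_approximation} requires the limit laws to be uniformly absolutely continuous with respect to a single continuous probability measure on $\R^{2d}$; since $P_{(\Xtheta,\pvec{c}^\vartheta)}$ puts full mass on the slice $\R^d\times\{\pvec{c}^\vartheta\}$, uniform absolute continuity would force each of these pairwise disjoint slices to carry measure bounded below under the dominating measure, which is impossible as soon as $\{\pvec{c}^\vartheta:\vartheta\in\Theta\}$ is infinite. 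Even when the hypothesis can be arranged (finitely many values of $\pvec{c}^\vartheta$), the subsequent appeal to Corollary \ref{corollary_equivalence_weak_distrib_conv} tacitly uses that the limit assigns no mass to $\partial(-\infty,\pvec{x}]$, which fails here. Your sandwich with $E_n^\vartheta=\{\norm{\Yntheta-\pvec{c}^\vartheta}\le\delta\}$ is correct as written.

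The price, which you state openly, is the continuity-point caveat, and this is not an artifact of your method: the lemma as literally stated (pointwise convergence of the joint CDFs at \emph{every} $(\pvec{x},\pvec{y})\in\R^{2d}$) is false. Take $\Theta$ a singleton, $d=1$, $\pvec{c}=0$ and $\Yntheta=1/n$ deterministic; then $F_{(\Xntheta,\Yntheta)}(x,0)=0$ for all $n$ while $F_{(\Xtheta,0)}(x,0)=\FXtheta(x)$. So the correct conclusion is exactly the one you prove, convergence at continuity points of the partially degenerate limit, and that is what is actually needed for Theorems \ref{theorem_uniform_CMT} and \ref{theorem_uniform_slutky}, whose limits are continuous again. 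One imprecision remains in your last step: as $\delta\searrow 0$ the indicators $1_{\{\pvec{c}^\vartheta\le\pvec{y}\pm\delta\mathbf{1}\}}$ collapse to $1_{\{\pvec{c}^\vartheta\le\pvec{y}\}}$ \emph{uniformly in} $\vartheta$ only if $\inf_{\vartheta\in\Theta}\min_i|y_i-c_i^\vartheta|>0$; requiring merely $y_i\neq c_i^\vartheta$ for every $i$ and every $\vartheta$ does not prevent the $c_i^\vartheta$ from accumulating at $y_i$, in which case the middle term $\sup_\vartheta F_{\Xtheta}(\pvec{x})\bbauf 1_{\{\pvec{c}^\vartheta\le\pvec{y}+\delta\mathbf{1}\}}-1_{\{\pvec{c}^\vartheta\le\pvec{y}\}}\bbzu$ need not vanish. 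You should state the exceptional set with this uniform gap condition.
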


\begin{proof} [\sl Proof of Lemma \ref{lemma_auxiliary_slutzky}]
	For any $\pvec{x}_1,\pvec{x}_2\in \R^d$ it holds that 
	$$	F_{(\Xntheta,\pvec{c^\vartheta})}	(\pvec{x}_1,\pvec{x}_2) = F_{\Xntheta}	(\pvec{x}_1) \, F_{\pvec{c^\vartheta}}	(\pvec{x}_2), \quad \mbox{and} \quad F_{(\Xtheta,\pvec{c^\vartheta})}	(\pvec{x}_1,\pvec{x}_2) = F_{\Xtheta}	(\pvec{x}_1) \, F_{\pvec{c^\vartheta}}	(\pvec{x}_2).	$$
	Thus,
	\begin{align*}
	\sup_{\vartheta \in \Theta} \babs	F_{(\Xntheta,\pvec{c^\vartheta})}	(\pvec{x}_1,\pvec{x}_2) - F_{(\Xtheta,\pvec{c^\vartheta})}	(\pvec{x}_1,\pvec{x}_2) \babs
	\leq \sup_{\vartheta \in \Theta} \babs	F_{\Xntheta}	(\pvec{x}_1) - F_{\Xtheta}	(\pvec{x}_1)\babs =o(1),
	\end{align*}
	which shows $	(\Xntheta,\pvec{c^\vartheta}) \unifdistTheta (\Xtheta,\pvec{c^\vartheta}).$
	Note that $ \sup_{\vartheta \in \Theta} \norm{ (\Xntheta,\Yntheta) - (\Xntheta,\pvec{c^\vartheta})  } = o_{P,\Theta}(1) $
	and with Theorem \ref{theorem_uniform_approximation} the assertion follows.
\end{proof}

The following theorem is a uniform version of the continuous mapping theorem.
\begin{satz} [Uniform Continuous Mapping Theorem] \label{theorem_uniform_CMT}
	Let $H:\R^d \to \R^s$ be continuous.
	If  $(\Pxtheta)_{\vartheta \in \Theta}$ is  uniformly absolutely continuous over $\Theta$ with respect to some continuous probability measure $Q$ and $  \Xntheta \unifdistTheta \Xtheta  ,$ then 
	$$		  H(\Xntheta) \unifdistTheta H(\Xtheta). $$
\end{satz}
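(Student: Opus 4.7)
The plan is to reduce the uniform continuous mapping theorem to uniform weak convergence of the laws, transport it through composition with $H$, and then translate back to uniform convergence of distribution functions via the already-established equivalences.

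\textbf{Step 1.} First I would apply Corollary \ref{corollary_equivalence_weak_distrib_conv} to rewrite the hypothesis $\Xntheta \unifdistTheta \Xtheta$ equivalently as uniform weak convergence of the laws,
$$P_{\Xntheta} \unifweakTheta P_{\Xtheta}.$$
This is legitimate because the uniform absolute continuity assumption on $(\Pxtheta)_{\vartheta \in \Theta}$ w.r.t.\ $Q$ is exactly the assumption of that corollary.

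\textbf{Step 2.} For any bounded continuous $g:\R^s\to\R$, the composition $g\circ H:\R^d\to\R$ is bounded and continuous because $H$ is continuous. Using the pushforward identity $\int g \, dP_{H(\Xntheta)} = \int (g\circ H)\, dP_{\Xntheta}$ and the analogous identity for the limit, I would bound
$$\sup_{\vartheta \in \Theta}\Babs \int g\, dP_{H(\Xntheta)} - \int g\, dP_{H(\Xtheta)}\Babs = \sup_{\vartheta \in \Theta}\Babs \int g\circ H\, dP_{\Xntheta} - \int g\circ H\, dP_{\Xtheta}\Babs = o(1),$$
the last equality being the definition of $P_{\Xntheta} \unifweakTheta P_{\Xtheta}$ applied to the test function $g\circ H$. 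This yields $P_{H(\Xntheta)} \unifweakTheta P_{H(\Xtheta)}$ on $\R^s$.

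\textbf{Step 3.} Finally I would translate uniform weak convergence of the image laws back into uniform convergence of the distribution functions of $H(\Xntheta)$ by invoking Corollary \ref{corollary_equivalence_weak_distrib_conv} once more, this time on $\R^s$. For this I need $(P_{H(\Xtheta)})_{\vartheta\in\Theta}$ to be uniformly absolutely continuous over $\Theta$ with respect to some continuous probability measure $\tilde Q$ on $\R^s$. The natural candidate is the pushforward $\tilde Q := Q\circ H^{-1}$: for $B\in\A$ with $\tilde Q(B)<\delta$ one has $Q(H^{-1}(B))<\delta$, hence $\sup_\vartheta P_{H(\Xtheta)}(B) = \sup_\vartheta P_{\Xtheta}(H^{-1}(B)) < \varepsilon$ by the given uniform absolute continuity of $(\Pxtheta)$ w.r.t.\ $Q$.

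\textbf{Main obstacle.} The only delicate point is verifying that $\tilde Q$ is itself \emph{continuous}; since $H$ need not be injective, this is not automatic from continuity of $Q$ and is the step I would have to treat carefully. It is, however, consistent with the implicit requirement that $H(\Xtheta)$ be continuously distributed (which is built into the very notation $\unifdistTheta$ for the conclusion of the theorem to be meaningful), and under that assumption the argument closes.
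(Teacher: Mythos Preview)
Your approach is exactly the paper's: the paper's entire proof is the single sentence ``follows directly from Corollary \ref{corollary_equivalence_weak_distrib_conv} since for any bounded continuous function $g$ the composition $g\circ H$ is still bounded and continuous,'' which is precisely your Steps~1--3.

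You have, however, been more careful than the paper and correctly isolated a point the paper passes over in silence. To go back from uniform weak convergence of $P_{H(\Xntheta)}$ to uniform convergence of the distribution functions via Corollary~\ref{corollary_equivalence_weak_distrib_conv}, one needs $(P_{H(\Xtheta)})_{\vartheta\in\Theta}$ to be uniformly absolutely continuous with respect to some \emph{continuous} measure on $\R^s$. Your candidate $\tilde Q = Q\circ H^{-1}$ handles the uniform absolute continuity, but its continuity amounts to $Q(H^{-1}(\{y\}))=0$ for every $y\in\R^s$, which fails for general continuous $H$ (e.g.\ $H$ constant). The paper does not address this; it is effectively relying on the standing convention in Section~\ref{sec_definitions} that the limit in the notation $\unifdistTheta$ always has continuous distribution, so the very statement of the conclusion presupposes what is needed. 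Your remark that ``under that assumption the argument closes'' is the honest reading of the theorem; without it the statement is simply false as written.
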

The proof follows directly from Corollary \ref{corollary_equivalence_weak_distrib_conv} since for any bounded continuous function $g:\R^d \to \R$ the composition $g \circ H$ is still bounded and continuous.



%
\begin{satz}[Uniform Slutzky's Theorem] \label{theorem_uniform_slutky}
	Let for any $\vartheta \in \Theta,$ $(\Yntheta)_{n\in\N}$   be a sequence of real-valued random vectors in $\R^d$ and $(\pvec{c}^\vartheta)_{\vartheta \in \Theta}$ be deterministic real vectors in $\R^d$ with $\Yntheta = \pvec{c}^\vartheta + o_{P,\Theta}(1).$
	Furthermore, 	suppose $(\Pxtheta)_{\vartheta \in \Theta}$ is uniformly absolutely continuous over $\Theta$ with respect to some continuous probability measure $Q,$ and $  \Xntheta \unifdistTheta \Xtheta  ,$ then
	$$	 \Xntheta + \Yntheta \unifdistTheta \Xtheta + \pvec{c}^\vartheta \quad \mbox{and}\quad	\Xntheta \cdot \Yntheta \unifdistTheta \Xtheta \cdot \pvec{c}^\vartheta,	$$
	where the multiplication is to be understood componentwise.
\end{satz}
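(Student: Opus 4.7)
The plan is to combine the auxiliary Lemma~\ref{lemma_auxiliary_slutzky} on joint convergence with the Uniform Continuous Mapping Theorem. First, I would apply Lemma~\ref{lemma_auxiliary_slutzky} directly: its hypotheses---namely $\Yntheta = \pvec{c}^\vartheta + o_{P,\Theta}(1)$, uniform absolute continuity of $(\Pxtheta)_{\vartheta\in\Theta}$ with respect to a continuous probability measure $Q$, and $\Xntheta \unifdistTheta \Xtheta$---match exactly the hypotheses of the theorem, yielding the joint uniform convergence in distribution
$$
(\Xntheta, \Yntheta) \unifdistTheta (\Xtheta, \pvec{c}^\vartheta)
$$
in $\R^{2d}$.

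Second, I would introduce the two continuous maps $H_+, H_\cdot : \R^{2d} \to \R^d$ defined by $H_+(\pvec{x}, \pvec{y}) = \pvec{x} + \pvec{y}$ and $H_\cdot(\pvec{x}, \pvec{y}) = (x_1 y_1, \ldots, x_d y_d)$. Since both maps are continuous, applying Theorem~\ref{theorem_uniform_CMT} to the joint convergence above immediately delivers
$$
\Xntheta + \Yntheta = H_+(\Xntheta, \Yntheta) \unifdistTheta H_+(\Xtheta, \pvec{c}^\vartheta) = \Xtheta + \pvec{c}^\vartheta,
$$
and analogously $\Xntheta \cdot \Yntheta = H_\cdot(\Xntheta, \Yntheta) \unifdistTheta H_\cdot(\Xtheta, \pvec{c}^\vartheta) = \Xtheta \cdot \pvec{c}^\vartheta$ for the componentwise product.

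The main technical obstacle is checking the uniform absolute continuity hypothesis that Theorem~\ref{theorem_uniform_CMT} requires for the joint limit family $(P_{(\Xtheta, \pvec{c}^\vartheta)})_{\vartheta\in\Theta}$ on $\R^{2d}$. Since $\pvec{c}^\vartheta$ is deterministic, the joint law concentrates on the affine slice $\R^d \times \{\pvec{c}^\vartheta\}$, so the dominating continuous reference measure $\tilde Q$ on $\R^{2d}$ has to be chosen with care. A natural attempt is to take a product $\tilde Q = Q \otimes \nu$ with a suitably chosen continuous probability measure $\nu$ whose support contains the set $\{\pvec{c}^\vartheta : \vartheta \in \Theta\}$, and then to transfer the uniform absolute continuity from $(\Pxtheta)_{\vartheta \in \Theta}$ by evaluating a generic set $A \subset \R^{2d}$ on its $\vartheta$-dependent slice $A_{\pvec{c}^\vartheta} = \{\pvec{x} : (\pvec{x}, \pvec{c}^\vartheta) \in A\}$ and invoking $P_{(\Xtheta, \pvec{c}^\vartheta)}(A) = \Pxtheta(A_{\pvec{c}^\vartheta})$. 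Modulo this bookkeeping, the rest of the argument is a pure composition of already-established results.
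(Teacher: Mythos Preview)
Your approach is essentially identical to the paper's: the paper's proof simply states that addition and componentwise multiplication are continuous and then invokes Lemma~\ref{lemma_auxiliary_slutzky} together with Theorem~\ref{theorem_uniform_CMT}, exactly as you do. The technical obstacle you flag---verifying uniform absolute continuity of $(P_{(\Xtheta,\pvec{c}^\vartheta)})_{\vartheta\in\Theta}$ on $\R^{2d}$ before applying Theorem~\ref{theorem_uniform_CMT}---is not addressed in the paper's proof either; the paper treats this step as routine, so your identification of it as the only nontrivial point is accurate and your proposal is at least as complete as the original.
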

\begin{proof} [\sl Proof of Theorem \ref{theorem_uniform_slutky}]
	The functions $(\pvec{x},\pvec{y}) \mapsto \pvec{x} + \pvec{y}$ and $(\pvec{x},\pvec{y}) \mapsto \pvec{x} \cdot \pvec{y}$ are continuous, such that
	applying the uniform continuous mapping theorem \ref{theorem_uniform_CMT} in combination with Lemma  \ref{lemma_auxiliary_slutzky} yields the assertion.
\end{proof}


\section{Uniform Lindeberg-Feller-Theorem} \label{sec_lindeberg_feller}

For the derivation of a uniform version of the Lindeberg-Feller-Theorem we need the following auxiliary results.

\begin{lemma} \label{lemma_durret_1}
	For $z_1,\ldots,z_n \in \C$ and $w_1,\ldots,w_n\in \C,$ where $\C$ is the field if complex numbers, with $\sup_{i} | \max \{z_i , w_i \} | \leq \theta$ for some $\theta>0.$
	Then,
	$$		\Big|	\prod_{i=1}^n z_i - \prod_{i=1}^n w_i 	\Big|	\leq \theta^{n-1} \sum_{i=1}^n |z_i - w_i|.	$$
\end{lemma}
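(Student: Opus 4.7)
The plan is to establish the bound via a telescoping identity followed by term‑by‑term estimation, which is the standard route (cf.\ \citet{durrett2010probability}) for this type of product comparison lemma. I shall interpret the hypothesis $\sup_i |\max\{z_i,w_i\}| \leq \theta$ in the natural way $\max\{|z_i|,|w_i|\} \leq \theta$ for every $i$, since the maximum of complex numbers is not otherwise defined.

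First, I would write out the telescoping decomposition
\begin{equation*}
\prod_{i=1}^n z_i - \prod_{i=1}^n w_i = \sum_{k=1}^n \Bigl(\prod_{i=1}^{k-1} w_i\Bigr) \, (z_k - w_k) \, \Bigl(\prod_{i=k+1}^{n} z_i\Bigr),
\end{equation*}
where as usual empty products are set equal to $1$. This identity can be verified either by direct expansion (the right-hand side collapses as a telescoping sum, each consecutive pair of summands sharing a common product that cancels) or, equivalently, by an induction on $n$ using the factorisation $AB - A'B' = A(B - B') + (A - A')B'$.

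Next, I would apply the triangle inequality to the telescoping identity and use the hypothesis to bound each of the product factors in absolute value by $\theta$. Specifically, for the $k$‑th summand the prefix product $\prod_{i<k} w_i$ is bounded by $\theta^{k-1}$ and the suffix product $\prod_{i>k} z_i$ is bounded by $\theta^{n-k}$, which combine to give exactly $\theta^{n-1}$ as the coefficient of $|z_k - w_k|$. Summing over $k$ yields
\begin{equation*}
\Bigl|\prod_{i=1}^n z_i - \prod_{i=1}^n w_i\Bigr| \leq \sum_{k=1}^n \theta^{k-1} \, |z_k - w_k| \, \theta^{n-k} = \theta^{n-1} \sum_{k=1}^n |z_k - w_k|,
\end{equation*}
which is the desired inequality.

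There is really no obstacle here; the only subtle point is that the bound $\theta^{n-1}$ (rather than $\theta^n$) arises because in each summand exactly one factor is replaced by the difference $z_k - w_k$, so only $n-1$ of the $\theta$‑bounded factors remain. The proof does not require any complex‑analytic tools and carries over verbatim if $z_i, w_i$ are taken in any normed algebra, which may be worth remarking in passing since the lemma is subsequently applied to characteristic functions evaluated at points of $\R^d$.
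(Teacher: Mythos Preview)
Your proof is correct and is precisely the standard telescoping argument that the paper defers to by citing Lemma~3.4.3 in \citet{durrett2010probability}; the paper gives no independent proof. Your clarification of the hypothesis as $\max\{|z_i|,|w_i|\}\le\theta$ is also the intended reading.
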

For the proof see for instance Lemma 3.4.3 in \citet{durrett2010probability}.

\begin{lemma} \label{lemma_durret_2}
	Let $r\in \N.$ Then for any random variable $X$ with $\E|X|^{r+1} < \infty$ it holds
	$$	\Babs \,	\varphi_X(t)  - \sum_{k=0}^r \E \frac{(itX)^k}{k!}	\Babs \leq \E \min\{ |tX|^{r+1}, 2|tX|^r		\}	.	$$
\end{lemma}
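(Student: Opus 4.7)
The plan is to reduce the claim to a pointwise bound on the difference $e^{iy} - \sum_{k=0}^r (iy)^k/k!$ and then push through expectations. Concretely, I would establish the (slightly sharper) pointwise estimate
$$\Big|e^{iy} - \sum_{k=0}^r \frac{(iy)^k}{k!}\Big| \le \min\Big\{\frac{|y|^{r+1}}{(r+1)!},\, \frac{2|y|^r}{r!}\Big\} \le \min\{|y|^{r+1},\, 2|y|^r\}, \qquad y \in \R,$$
the second inequality being immediate since $(r+1)!,\, r! \ge 1$ for $r \in \N$. Once this is in hand, substituting $y = tX(\omega)$ pointwise, pulling the expectation inside the absolute value via the triangle inequality for integrals, and swapping $\E$ with the finite sum (legal because $\E|X|^{r+1} < \infty$ implies $\E|X|^k < \infty$ for $k = 0, \dots, r$) gives
$$\Big|\varphi_X(t) - \sum_{k=0}^r \E\frac{(itX)^k}{k!}\Big| = \Big|\E\Big[e^{itX} - \sum_{k=0}^r \frac{(itX)^k}{k!}\Big]\Big| \le \E\min\{|tX|^{r+1},\, 2|tX|^r\},$$
which is exactly the claim.

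For the two sides of the pointwise minimum I would employ the Taylor integral remainder for $f(y) = e^{iy}$, which at order $r$ reads
$$e^{iy} - \sum_{k=0}^r \frac{(iy)^k}{k!} = \frac{i^{r+1}}{r!}\int_0^y (y-s)^r e^{is}\, \d s;$$
since $|e^{is}| = 1$, this immediately delivers the bound $|y|^{r+1}/(r+1)!$. For the second bound (assuming $r \ge 1$; the case $r = 0$ is trivial from $|e^{iy} - 1| \le 2$), I would apply the remainder at order $r - 1$ and then rewrite the leading monomial in integral form via $\int_0^y (y-s)^{r-1}\, \d s = y^r/r$, so that
$$e^{iy} - \sum_{k=0}^r \frac{(iy)^k}{k!} = \frac{i^r}{(r-1)!}\int_0^y (y-s)^{r-1}(e^{is} - 1)\, \d s,$$
and the bound $|e^{is} - 1| \le 2$ yields $2|y|^r/r!$.

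The main obstacle is purely bookkeeping in the second representation: one has to recognize that the leading term $(iy)^r/r!$ can itself be absorbed into the Taylor remainder of order $r-1$ via the identity $\frac{i^r}{(r-1)!}\int_0^y (y-s)^{r-1}\,\d s = (iy)^r/r!$, so that the factor $e^{is}$ becomes $e^{is} - 1$. Once this manipulation is in place, the two pointwise bounds combine into the claimed $\min$, and the passage to expectations is entirely routine.
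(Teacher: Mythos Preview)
Your argument is correct and is essentially the standard proof found in Durrett (Equation (3.3.3)), which is precisely what the paper cites rather than proving the lemma itself. The integral remainder representation together with the identity $\frac{i^r}{(r-1)!}\int_0^y (y-s)^{r-1}\,\d s = (iy)^r/r!$ is exactly the mechanism Durrett uses, so there is no meaningful difference in approach to report.
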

A proof of this result is given for instance in \citet{durrett2010probability}, see Equation (3.3.3).

With L'H\^opital's rule obtain the following result.
\begin{lemma} \label{lemma_durret_3}
	Let $\vartheta \in \Theta$ and for each $n$ let $ c_{n,j}^\vartheta,$ $1\leq j \leq n$ be real-values with 
	\begin{enumerate}
		\item $\sup_{  \vartheta \in \Theta } \max_{j}  |c_{n,j}^\vartheta| \to 0;$
		\item $\sup_{  \vartheta \in \Theta } | \sum_{j=1}^n c_{n,j}^\vartheta	- \lambda	|$ for some $\lambda\in \R;$
		\item $\sup_{  \vartheta \in \Theta } \sup_{n} \sum_{j=1}^n |c_{n,j}^\vartheta| < \infty.$
	\end{enumerate}
	Then, $	\sup_{  \vartheta \in \Theta } | \prod_{j=1}^n (1+c_{n,j}^\vartheta)   - \exp(\lambda)	| \to 0.		$
\end{lemma}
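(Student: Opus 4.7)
The plan is to pass to logarithms. For $|x| \le 1/2$, the integral identity $\log(1+x) - x = -\int_0^x \tfrac{t}{1+t}\,\dt$ combined with $|1+t| \ge 1/2$ on the interval of integration gives the pointwise bound $|\log(1+x) - x| \le x^2$; alternatively, L'H\^opital's rule applied to $(\log(1+x)-x)/x^2$ yields the limit $-1/2$ as $x \to 0$ and hence the same order-of-magnitude estimate near zero, in the spirit of the remark preceding the lemma.

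By condition 1 there exists $n_0 \in \N$ such that $\sup_{\vartheta \in \Theta}\max_{1 \le j \le n}|c_{n,j}^\vartheta| \le 1/2$ for all $n \ge n_0$; in particular $1 + c_{n,j}^\vartheta \ge 1/2 > 0$, so $\log(1+c_{n,j}^\vartheta)$ is well defined. The estimate above then yields
\begin{align*}
\sup_{\vartheta \in \Theta}\Babs \sum_{j=1}^n \log(1+c_{n,j}^\vartheta) - \sum_{j=1}^n c_{n,j}^\vartheta \Babs
&\le \sup_{\vartheta \in \Theta}\sum_{j=1}^n (c_{n,j}^\vartheta)^2 \\
&\le \Big(\sup_{\vartheta \in \Theta}\max_j |c_{n,j}^\vartheta|\Big)\cdot \Big(\sup_{\vartheta, n}\sum_{j=1}^n |c_{n,j}^\vartheta|\Big),
\end{align*}
which tends to zero by conditions 1 and 3. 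Reading condition 2 as $\sup_{\vartheta \in \Theta}|\sum_j c_{n,j}^\vartheta - \lambda| \to 0$ and invoking the triangle inequality, this gives $\sup_{\vartheta \in \Theta}|\sum_j \log(1+c_{n,j}^\vartheta) - \lambda| \to 0$. In particular the sums $\sum_j \log(1+c_{n,j}^\vartheta)$ lie in a uniformly bounded range (also directly, since $|\log(1+x)| \le 2|x|$ for $|x| \le 1/2$ and $\sum_j |c_{n,j}^\vartheta|$ is uniformly bounded), so the uniform continuity of $\exp$ on that range together with the identity $\prod_j(1+c_{n,j}^\vartheta) = \exp(\sum_j \log(1+c_{n,j}^\vartheta))$ finishes the proof.

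The main subtlety is that a direct application of Lemma \ref{lemma_durret_1} with $z_j = 1 + c_{n,j}^\vartheta$ and $w_j = \exp(c_{n,j}^\vartheta)$ is not available: the factor $\theta^{n-1}$ appearing there must be at least $1$ here and can grow like $e^{O(n\max_j |c_{n,j}^\vartheta|)}$, which is not controlled by condition 3. The logarithmic detour above exploits the additive structure of $\sum_j \log(1+c_{n,j}^\vartheta)$ to avoid this issue, and the only uniformity input needed is the uniform Taylor remainder $|\log(1+x) - x| \le x^2$ on $\{|x| \le 1/2\}$, which in turn is guaranteed by condition 1.
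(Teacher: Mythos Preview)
Your proof is correct and is precisely the standard argument the paper alludes to with its one-line hint ``With L'H\^opital's rule obtain the following result'': the paper gives no further details, but the intended mechanism is exactly the Taylor-type estimate $\log(1+x)=x+O(x^2)$ (equivalently obtained via L'H\^opital), combined with conditions 1 and 3 to control the remainder and condition 2 to identify the limit. Your added remark on why Lemma \ref{lemma_durret_1} alone is insufficient is a nice clarification but not needed for the argument.
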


\newcommand{\Xnitheta}{X_{n,i}^\vartheta}
\newcommand{\Pxitheta}{P_{X_{n,i}^\vartheta}}

\begin{satz} \label{theorem:unif_lindeberg_feller_multi}
	For each $n\in \N$ and $\vartheta\in \Theta$ let $\Xnitheta,$ $1\leq i \leq n$ be centered and independent random vectors in $\R^d.$
	Assume that $(\Pxitheta)_{\vartheta \in \Theta}$ is uniformly absolutely continuous over $\Theta$ with respect to some continuous probability measure $Q.$
	Moreover, suppose that
	\begin{enumerate}
		\item $	\sup_{\vartheta \in \Theta} \norm{   \sum_{i=0}^n \E \Xnitheta \big(\Xnitheta\big)^T- \Sigma	} =o(1),	$ for some semi-positive-definite matrix $\Sigma \in \R^{d\times d};$
		\item For any $\varepsilon>0$ it holds that $\limsup_n \sup_{  \vartheta \in \Theta } \sum_{i=1}^n  \E \big(	\norm{\Xnitheta}^2 \, 1_{\norm{\Xnitheta} > \varepsilon}			\big) =0;$
	\end{enumerate}
	Then, $$\sup_{\vartheta\in \Theta} \big| \, P\big( X_{n,1}^\vartheta + \ldots + X_{n,n}^\vartheta \leq \pvec{x}\big) - \Phi_{\Sigma}(\pvec{x})	\, 		\big|=o(1),			$$
	where $\Phi_{\Sigma}$ is the cumulative distribution function of $N(0,\Sigma).$ 
\end{satz}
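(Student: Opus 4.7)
The plan is to imitate the classical characteristic-function proof of the Lindeberg-Feller theorem, checking that each estimate survives the supremum over $\vartheta\in\Theta$ when one uses the uniform variants of the results already established in the paper. The first move is to reduce to one dimension via the uniform Cram\'er-Wold theorem (Theorem \ref{theorem:unif_cramer_wold}). For fixed $\pvec{a}\in\R^d$ set $Y_{n,i}^\vartheta := \pvec{a}^T \Xnitheta$; these are independent, centered real random variables, and the hypotheses of the theorem imply
\begin{align*}
\sup_{\vartheta\in\Theta}\Babs \sum_{i=1}^n \E (Y_{n,i}^\vartheta)^2 - \sigma^2\Babs = o(1), \qquad \sigma^2 := \pvec{a}^T \Sigma\pvec{a},
\end{align*}
together with the one-dimensional uniform Lindeberg condition, which follows from $|Y_{n,i}^\vartheta|\le \norm{\pvec{a}}\cdot\norm{\Xnitheta}$. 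By the uniform L\'evy continuity theorem (Theorem \ref{theorem:uniform_levy_continuiuty}) it then suffices to verify
\begin{align*}
\sup_{\vartheta \in \Theta} \Babs \varphi_{S_n^\vartheta}(t) - \exp(-\sigma^2 t^2/2) \Babs = o(1), \quad \forall t \in \R,
\end{align*}
where $S_n^\vartheta = \sum_i Y_{n,i}^\vartheta$ and, by independence, $\varphi_{S_n^\vartheta}(t) = \prod_i \varphi_{Y_{n,i}^\vartheta}(t)$.

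Next, setting $c_{n,i}^\vartheta := -t^2 \E(Y_{n,i}^\vartheta)^2/2$, Lemma \ref{lemma_durret_2} with $r=2$ (and centering) yields the uniform per-term bound $|\varphi_{Y_{n,i}^\vartheta}(t) - (1+c_{n,i}^\vartheta)| \le \E\min\{|tY_{n,i}^\vartheta|^3, 2|tY_{n,i}^\vartheta|^2\}$. Splitting the minimum at the threshold $|Y_{n,i}^\vartheta|\le\varepsilon$ and summing in $i$ produces
\begin{align*}
\sum_{i=1}^n |\varphi_{Y_{n,i}^\vartheta}(t) - (1+c_{n,i}^\vartheta)| \le |t|^3 \varepsilon \sum_{i=1}^n \E (Y_{n,i}^\vartheta)^2 + 2t^2 \sum_{i=1}^n \E\bbauf (Y_{n,i}^\vartheta)^2 1_{|Y_{n,i}^\vartheta|>\varepsilon}\bbzu,
\end{align*}
whose $\limsup_n \sup_\vartheta$ is at most $|t|^3 \varepsilon \sigma^2$ by the first step and the uniform Lindeberg condition. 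Since $|\varphi_{Y_{n,i}^\vartheta}(t)|\le 1$ and $|1+c_{n,i}^\vartheta|\le 1$ once $\max_i \E(Y_{n,i}^\vartheta)^2$ is small enough (a uniform consequence of Lindeberg, via $\max_i\E(Y_{n,i}^\vartheta)^2\le\varepsilon^2+\sum_j\E((Y_{n,j}^\vartheta)^2 1_{|Y_{n,j}^\vartheta|>\varepsilon})$), Lemma \ref{lemma_durret_1} with $\theta=1$ then yields $\sup_\vartheta|\varphi_{S_n^\vartheta}(t) - \prod_i(1+c_{n,i}^\vartheta)|\to 0$ upon letting first $n\to\infty$ and then $\varepsilon\to 0$.

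Finally, the desired statement follows from Lemma \ref{lemma_durret_3} applied to the array $(c_{n,i}^\vartheta)$ with $\lambda = -\sigma^2 t^2/2$: its three hypotheses are exactly the uniform smallness of $\max_i \E(Y_{n,i}^\vartheta)^2$, the uniform convergence of $\sum_i \E(Y_{n,i}^\vartheta)^2$ to $\sigma^2$, and the uniform boundedness of that same sum, all of which have already been established. The main obstacle is not any single estimate but the careful orchestration of the iterated limits $n\to\infty$ and $\varepsilon\to 0$ so that they commute uniformly in $\vartheta$; a secondary subtlety is that Theorem \ref{theorem:unif_cramer_wold} requires uniform absolute continuity of the Gaussian limit, which is automatic when $\Sigma$ is strictly positive definite and otherwise needs a preliminary reduction via projection onto the range of $\Sigma$.
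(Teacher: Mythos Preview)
Your proposal is correct and follows essentially the same route as the paper: reduce to $d=1$ via the uniform Cram\'er-Wold theorem, then combine Lemmas \ref{lemma_durret_1}--\ref{lemma_durret_3} exactly as you outline to obtain the uniform characteristic-function convergence and conclude with the uniform L\'evy continuity theorem. Your closing remark about the semi-definite case of $\Sigma$ is in fact more careful than the paper, which simply invokes Cram\'er-Wold without comment.
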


\begin{proof}
	By the uniform Cram\'er-Wold theorem \ref{theorem:unif_cramer_wold} it suffices to show the univariate version of Theorem \ref{theorem:unif_lindeberg_feller_multi}.
	Therefore, suppose that $\Xnitheta$ are random variables in $\R$ and $\Sigma = \sigma^2$ is a positive value.
	We show that 
	\begin{align} \label{eq_help_lindeberg_feller_1}
	\sup_{\vartheta \in \Theta} \big| \prod_{i=1}^n	\varphi_{\Xnitheta}(t)	- \exp\big(		- \nicefrac{t^2 \sigma^2}{2}	\big)			\big| =o(1), \quad \forall t\in \R.
	\end{align}
	By the uniform L\'evy-continuity Theorem \ref{theorem:uniform_levy_continuiuty} this would complete the proof.
	\newcommand{\zni}{z_{n,i}^\vartheta}
	\newcommand{\wni}{w_{n,i}^\vartheta}
	
	Let $t\in \R$ be fixed. Set $\zni=\varphi_{\Xnitheta}(t)$ and $\wni=(1-\nicefrac{t^2 \E [(\Xnitheta)^2]}{2}).$
	At	first, note that $|\zni|\leq 1$ for each $n,i,\vartheta$ as well as 
	\begin{align*}
	\E [(\Xnitheta)^2] \leq \varepsilon^2 + \E \big[(\Xnitheta)^2 1_{|\Xnitheta|>\varepsilon}\big],
	\end{align*}
	for any $\varepsilon>0,$ so that by the second assumption 
	\begin{align} \label{eq_help_lindeberg_feller_2}
	\sup_{  \vartheta \in \Theta } \sup_{i} \E [(\Xnitheta)^2] \to 0.
	\end{align}
	This implies for $n$ large enough that $|\wni|\leq 1$ for all $n,i$ and $\vartheta.$
	Hence, by Lemma \ref{lemma_durret_1}
	\begin{align} \label{eq_help_lindeberg_feller_3}
	\Big|	\prod_{i=1}^n \zni - \prod_{i=1}^n \wni 	\Big|	\leq \sum_{i=1}^n |\zni - \wni|.		
	\end{align}	
	By Lemma \ref{lemma_durret_2} we have for any $\varepsilon>0$
	\begin{align*}
	|\zni - \wni| 
	&\leq \E \min\Big(	|t \, \Xnitheta|^3 \, , \,  2 |t \, \Xnitheta|^2		\Big) \\
	%
	%
	&\leq \varepsilon \, t^3\, \E \Big(	| \Xnitheta|^2 \, 1_{|\Xnitheta|\leq \varepsilon}\Big) +    2 t^2 \, \E\Big(  |\Xnitheta|^2 \, _{|\Xnitheta|> \varepsilon}		\Big).
	\end{align*}
	By the assumptions 1.\ and 2.\ of the theorem in combination with (\ref{eq_help_lindeberg_feller_3}) it follows that
	\begin{align*}
	\limsup_n \sup_{  \vartheta \in \Theta } 	\Big|	\prod_{i=1}^n \zni - \prod_{i=1}^n \wni 	\Big|
	\leq \limsup_n \sup_{  \vartheta \in \Theta } 
	\sum_{i=1}^n |\zni - \wni| \leq \varepsilon \, t^3 \, \sigma^2.
	\end{align*}
	Letting $\varepsilon \to 0$ the right-hand side of the preceding display converges to zero.
	To verify (\ref{eq_help_lindeberg_feller_1}) it remains to show that
	\begin{align*} 
	%
	\limsup_n \sup_{  \vartheta \in \Theta } 	\Big|  \prod_{i=1}^n \wni - 	\exp\big(		- \nicefrac{t^2 \sigma^2}{2} \big)	\Big|= 0.	
	\end{align*}
	But this is immediate from Lemma \ref{lemma_durret_3} by setting $c_{n,i}^\vartheta = - \nicefrac{t^2 \E [(\Xnitheta)^2]}{2}.$
	Indeed, (\ref{eq_help_lindeberg_feller_2}) shows 1.\ of Lemma \ref{lemma_durret_3}, while assumption 2.\ of this theorem implies
	\begin{align*}
	\sup_{  \vartheta \in \Theta } \babs \sum_{i=1}^n c_{n,i}^\vartheta - \frac{\sigma^2 \, t^2}{2}		\babs \to 0,
	\end{align*}
	and assumption 2.\ shows the last assumption of Lemma \ref{lemma_durret_3}.
\end{proof}

\newpage
\bibliographystyle{chicago}
\bibliography{Literature_supplement}

\newpage

\end{document}